\nonstopmode \numberwithin{equation}{section}
\newtheorem{theorem}{Theorem}[section]
\theoremstyle{remark}
\theoremstyle{definition}
\newtheorem{definition}{Definition}[section]
\theoremstyle{plain}
\newtheorem*{LemA}{Lemma A}
\newtheorem*{LemB}{Lemma B}
\newtheorem*{LemC}{Lemma C}
\newtheorem*{LemD}{Lemma D}
\newtheorem*{LemE}{Lemma E}
\newtheorem*{LemF}{Lemma F}
\numberwithin{equation}{section}
\numberwithin{theorem}{section}
\newcounter{minutes}\setcounter{minutes}{\time}
\newcounter{hours}\setcounter{hours}{\time}
\begin{document}

\title{Results on Logarithmic Coefficients for the Class of Bounded Turning Functions}

\author{Sanju Mandal}
\address{Sanju Mandal, Department of Mathematics, Jadavpur University, Kolkata-700032, West Bengal, India.}
\email{sanjum.math.rs@jadavpuruniversity.in, 	sanju.math.rs@gmail.com}

\author{Molla Basir Ahamed$ ^* $}
\address{Molla Basir Ahamed, Department of Mathematics, Jadavpur University, Kolkata-700032, West Bengal, India.}
\email{mbahamed.math@jadavpuruniversity.in}

\subjclass[2020]{Primary 30C45; Secondary 30C50, 30C55}
\keywords{Univalent functions, Bounded Turning Functions, Hankel determinants, Logarithmic coefficients, Inverse functions, Zalcman functional, Schwarz functions}

\def\thefootnote{}
\footnotetext{ {\tiny File:~\jobname.tex,
printed: \number\year-\number\month-\number\day,
          \thehours.\ifnum\theminutes<10{0}\fi\theminutes }
} \makeatletter\def\thefootnote{\@arabic\c@footnote}\makeatother

\begin{abstract}
It is crucial to explore the sharp bounds of logarithmic coefficients and the Hankel determinant involving logarithmic coefficients as part of coefficient problems in various function classes. Our primary objective in this study is to determine the sharp bounds for logarithmic coefficients as well as logarithmic inverse coefficients of bounded analytic functions associated with a bean-shaped domain in the class $\mathcal{BT_\mathfrak{B}}$. For this class, we also establish the sharp bounds for the second Hankel determinant involving logarithmic coefficients as well as logarithmic inverse coefficients. In addition, we establish sharp bounds for the generalized Zalcman conjecture inequality and the moduli differences of logarithmic coefficients for the class $\mathcal{BT_\mathfrak{B}}$.
\end{abstract}

\thanks{}
\maketitle
\pagestyle{myheadings}
\markboth{S. Mandal and M. B. Ahamed}{Logarithemic coefficient bounds of the Bounded Turning Functions}

\section{\bf Introduction}  
Let $\mathcal{H}$ denote the class of holomorphic functions $f$ in the open unit disk $\mathbb{D}=\{z\in\mathbb{C}: |z|<1\}$. Then $\mathcal{H}$ is a locally convex topological vector space endowed with the topology of uniform convergence over compact subsets of $\mathbb{D}$. Let $\mathcal{A}$ denote the class of functions $f\in\mathcal{H}$ such that $f(0)=0$ and $f^{\prime}(0)=1$ \textit{i.e.}, the function $f$ is of the form
\begin{align}\label{eq-1.1}
	f(z)=z+ \sum_{n=2}^{\infty}a_nz^n,\; \mbox{for}\; z\in\mathbb{D}.
\end{align} 
Let $\mathcal{S}$ denote the subclass of all functions in $\mathcal{A}$ which are univalent. For a comprehensive understanding of the theory of univalent functions and their significance in coefficient problems, we refer to the books \cite{Duren-1983-NY,Goodman-1983}.\vspace{2mm}

Before discussing some recent results and our main results of this paper, let us recall an important and useful tool known as the differential subordination technique. Many problems in geometric function theory can be solved effectively and precisely using this method.

\begin{definition}\label{def-1.1}
Let $f$ and $g$ be two analytic functions in the unit disk $\mathbb{D}$. Then $f$ is said to be subordinate to $g$, written as $f\prec g$ or $f(z)\prec g(z)$, if there exists a function $\omega$, analytic in $\mathbb{D}$ with $w(0)=0$, $|w(z)|<1$ such that $f(z)=g(w(z))$ for $z\in\mathbb{D}$. Moreover, if $g$ is univalent in $\mathbb{D}$ and $f(0)=g(0)$, then $f(\mathbb{D})\subseteq g(\mathbb{D})$.
\end{definition}

The most fundamental and significant subfamilies of the set $\mathcal{S}$ are the family $\mathcal{S}^*$ of starlike functions and the family $\mathcal{C}$ of convex functions which are defined as follows:
\begin{align*}
	\mathcal{S}^*=\left\{f\in\mathcal{A}: \frac{zf^{\prime}(z)} {f(z)}\prec\psi(z),\; z\in\mathbb{D}\right\}
\end{align*}
and
\begin{align*}
	\mathcal{C}=\left\{f\in\mathcal{A}: 1+ \frac{zf^{\prime\prime}(z)} {f^{\prime}(z)}\prec\psi(z),\; z\in\mathbb{D}\right\},
\end{align*}
with
\begin{align*}
	\psi(z)=1+2\sum_{n=2}^{\infty} z^n:=\frac{1+z}{1-z},\; z\in\mathbb{D}.
\end{align*}
A function $ f\in\mathcal{A} $ is called starlike (resp. convex) if the image $ f(\mathbb{D}) $ is a starlike domain
with respect to the origin (resp., convex). The classes of all starlike and convex functions that are univalent are denoted by $ \mathcal{S}^* $ and $ \mathcal{C} $, respectively. It is well-known that a function $ f $ in $ \mathcal{A} $ is starlike (resp. convex) if and only if $ {\rm Re}(zf^{\prime}(z)/f(z))>0 $ $ (\mbox{resp.}\; {\rm Re} (1+zf^{\prime\prime}(z)/f^{\prime}(z)))>0 $ for $ z\in\mathbb{D} $. By varying the function $\psi(z)$ in the above equations $\mathcal{S}^*$ and $\mathcal{C}$, we get some subfamilies which have significant geometric sense. The function $\mathfrak{B}(z)= \sqrt{1+\tanh z}$ represents a bean-shaped domain.\vspace{2mm}

Recently, Nandhini and Sruthakeerthi \cite{Nandhini-Sruthakeerthi-COAT-2025} defined a new subclass of bounded turning functions associated with a bean-shaped domain. There are several other subclasses that have been studied by researchers and each has significant geometrical properties. 
\begin{definition}\cite{Nandhini-Sruthakeerthi-COAT-2025}
Let $f\in\mathcal{A}$ is in the class $\mathcal{BT_\mathfrak{B}}$, if 
\begin{align*}
	\mathcal{BT_\mathfrak{B}}=\left\{f\in\mathcal{S}: f^{\prime}(z)\prec\mathfrak{B}(z)\right\}.
\end{align*}
\end{definition}
Note that 
\begin{align*}
	\mathfrak{B}(z)=\sqrt{1+\tanh z}=\sqrt{\frac{2}{(1+ e^{-2z})}}
\end{align*} and $\mathfrak{B}(z)$ conformally maps $\mathbb{D}$ onto
the region
\begin{align*}
	\Omega_{\mathfrak{B}}:=\left\{\omega\in\mathbb{C}:\;\vline\log\left(\frac{\omega^2}{2-\omega^2}\right)\vline<2\right\}.
\end{align*}
Geometrically, each $f\in\mathfrak{B}(z)$ maps to a bean-shaped region symmetric around the real axis, as shown in the following Fig. 1, left side is unit disk in $z$-plane and right side is $\omega$-plane.
\begin{figure}[!htb]
	\begin{center}
		\subfloat[Unit disk $z$-plane]{
		\includegraphics[width=0.36\linewidth]{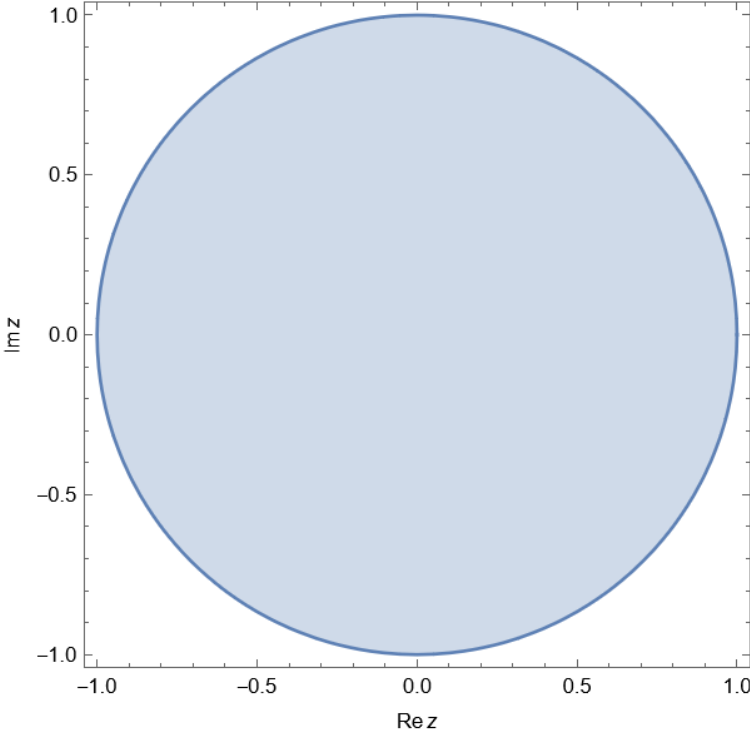}}
		\hspace{2mm}
		\subfloat[$w$-plane]{
		\includegraphics[width=0.52\linewidth]{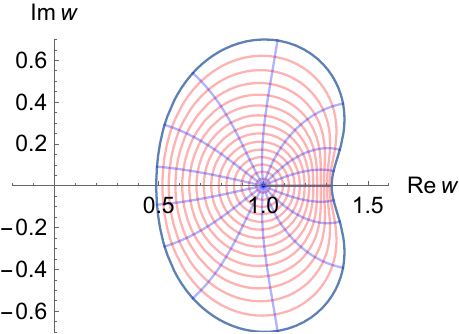}}
	\end{center}
	\caption{The image $ \mathfrak{B}(\mathbb{D}) $ is a bean shaped domain by the function $w=\mathfrak{B}(z)=\sqrt{1+\tanh z}$.}
\end{figure}\vspace{2mm}

Finding an upper bound for coefficients has been one of the central research topics in geometric function theory, as it reveals various properties of functions. The main challenge is to identify a suitable function from the class that effectively shown the sharpness of the bound. However, we point out that despite the extensive exploration of coefficient problems involving the Hankel determinant for the class $\mathcal{BT_\mathfrak{B}}$, the corresponding determinant with the logarithmic coefficient for the class $\mathcal{BT_\mathfrak{B}}$ has not garnered as much attention from researchers. Furthermore, the sharpness of the logarithmic coefficients has not been explored for the class $\mathcal{BT_\mathfrak{B}}$. This lack of attention serves as the primary motivation for the present paper and contribute to the understanding the several bounds of logarithmic coefficients for the class $\mathcal{BT_\mathfrak{B}}$. \vspace{2mm}

In this article, we aim to determine the sharp bounds for various problems in geometric function theory. These problems include finding the sharp bounds for logarithmic coefficients as well as logarithmic inverse coefficients and the sharp bound for the Hankel determinant of logarithmic coefficients as well as logarithmic inverse coefficients. In the subsequent sections, we will discuss our findings and provide a background study on these topics. The organization of this paper is as follows: In Section 3, we establish the sharp bounds for $\gamma_1, \gamma_2, \gamma_3$, and $\gamma_4$ and $|H_{2,1}(F_f/2)|$ for functions $f \in \mathcal{BT_\mathfrak{B}}$. In Section 4, we establish the sharp bound of $\Gamma_1$ and $\Gamma_2$ and $|H_{2,1}(F_{f^{-1}}/2)|$ for functions in the class $\mathcal{BT_\mathfrak{B}}$. In Section 5, we establish the sharp generalized Zalcman conjecture inequality for the class $\mathcal{BT_\mathfrak{B}}$. In Section 6, we establish the sharp moduli differences of logarithmic coefficients for the class $\mathcal{BT_\mathfrak{B}}$. The proofs of the results are discussed in detail in each respective section.

\section{\bf Sharp bound of logarithmic coefficients for functions in the class $\mathcal{A}$:}\label{Sec-2}
For $f\in\mathcal{S}$, we define the logarithmic coefficients $\gamma_{n}(f)$ by
\begin{align}\label{eq-2.1}
	F_{f}(z):=\log\dfrac{f(z)}{z}=2\sum_{n=1}^{\infty}\gamma_{n}(f)z^n, \;\; z\in\mathbb{D},\;\;\log 1:=0.
\end{align}
The logarithmic coefficients $\gamma_{n}$ for functions in the class $\mathcal{S}$ play a vital role in Milin’s conjecture (\cite{Milin-1977-ET}, see also \cite[p.155]{Duren-1983-NY}).
Milin conjectured that for $f\in\mathcal{S}$ and $n\geq2$,
\begin{align*}
	\sum_{m=1}^{n}\sum_{k=1}^{m}\left(k|\gamma_{k}|^2 -\frac{1}{k}\right)\leq 0,
\end{align*}
where the equality holds if, and only if, $f$ is a rotation of the Koebe function. De Branges \cite{Branges-AM-1985} has proved Milin conjecture which confirmed the famous Bieberbach conjecture. On the other hand, one of reasons for more attention has been given to the logarithmic coefficients is that the sharp bound for the class $\mathcal{S}$ is known only for $\gamma_{1}$ and $\gamma_{2}$, namely
\begin{align*}
	|\gamma_{1}|\leq 1, \;\; |\gamma_{2}|\leq \dfrac{1}{2}+ \dfrac{1}{e} =0.635\ldots
\end{align*}
It is still an open problem to find the sharp upper bounds for absolute value of $\gamma_{n}$, $n\geq 3$, for functions in the class $\mathcal{S}$. For the Koebe function \( k(z) = \frac{z}{(1-z)^2} \), the logarithmic coefficient is given by \( \gamma_n = \frac{1}{n} \). Since the Koebe function \( k \) serves as the extremal function for many extremal problems in the class \( \mathcal{S} \), it is anticipated that \( |\gamma_n| \leq \frac{1}{n} \) holds for functions in \( \mathcal{S} \). The problem of estimating the modulus of logarithmic coefficients for functions \( f \in \mathcal{S} \) with different settings and its various sub-classes has recently attracted the attention of several researchers. Recently, several researchers have shown interest in studying the logarithmic coefficients of functions in the class $ \mathcal{S} $ and its subclasses of $ \mathcal{S} $. For more information on logarithmic coefficients, we refer to \cite{Ali-Allu-PAMS-2018, Roth-PAMS-2007, Ali-Allu-Thomas-CRMCS-2018, Cho-Kowalczyk-kwon-Lecko-Sim-RACSAM-2020, Girela-AASF-2000,Thomas-PAMS-2016}.\vspace{2mm}

Establishing the sharp bounds of Hankel determinants of order 2 and 3 has been a major concern in geometric function theory, as it is directly related to coefficient problems. These determinants are formed by using the coefficients of analytic functions $ f $ which are represented by \eqref{eq-1.1} in the unit disk $\mathbb{D}$. Hankel matrices (and determinants) have emerged as fundamental elements in different areas of mathematics, finding a wide range of applications (see \cite{Ye-Lim-FCM-2016}). The primary objective of this study is to determine the sharp bound of logarithmic coefficients and the Hankel determinants involving the logarithmic coefficients. To start, we provide the definitions of Hankel determinants in the case that $f\in \mathcal{A}$.\vspace{2mm}

The Hankel determinant $H_{q,n}(f)$ of Taylor's coefficients of functions $f\in\mathcal{A}$ represented by \eqref{eq-1.1} is defined for $q,n\in\mathbb{N}$ as follows:
\begin{align*}
H_{q,n}(f):=\begin{vmatrix}
a_{n} & a_{n+1} &\cdots& a_{n+q-1}\\ a_{n+1} & a_{n+2} &\cdots& a_{n+q} \\ \vdots & \vdots & \vdots & \vdots \\ a_{n+q-1} & a_{n+q} &\cdots& a_{n+2(q-1)}
\end{vmatrix}.
\end{align*}
The extensive exploration of the sharp bounds of the Hankel determinants for starlike, convex, and other function classes have been undertaken in various studies (see \cite{ Kowalczyk-Lecko-RACSAM-2023, Raza-Riza-Thomas-BAMS-2023, Kowalczyk-Lecko-BAMS-2022, Mandal-Ahamed-LMJ-2024, Man-Roy-Aha-IJS-2024}), and their sharp bounds have been successfully established. \vspace{1.2mm}

Differentiating \eqref{eq-2.1} and then using \eqref{eq-1.1}, a simple computation shows that 
\begin{align}\label{eq-2.2}
	\begin{cases}
		\gamma_{1}=\dfrac{1}{2}a_{2},\vspace{2mm}\\ \gamma_{2}=\dfrac{1}{2} \left(a_{3} -\dfrac{1}{2}a^2_{2}\right), \vspace{2mm}\\ \gamma_{3} =\dfrac{1}{2}\left(a_{4}- a_{2}a_{3} +\dfrac{1}{3}a^3_{2}\right), \vspace{2mm}\\
		\gamma_{4}= \dfrac{1}{2} \left(a_{5} -a_{2}a_{4} +a^2_{2} a_{3} -\dfrac{1}{2}a^2_{3} -\dfrac{1}{4}a^4_{2}\right).
	\end{cases}
\end{align}
In $2022$, Kowalczyk and Lecko \cite{Kowalczyk-Lecko-BAMS-2022} proposed a Hankel determinant $H_{q,n}(F_f/2)$ whose elements are the logarithmic coefficients of $f\in\mathcal{S}$, realizing the extensive use of these coefficients. It follows that
\begin{align}\label{eq-2.3}
		H_{2,1}(F_f/2):=\gamma_1\gamma_3-\gamma_2^2=\frac{1}{48}\left(a_2^4-12a_3^2+12a_2a_4\right).
\end{align}
Furthermore, $H_{2,1}(F_{f}/2)$ is invariant under rotation, since for $f_{\theta}(z):=e^{-i\theta}f(e^{i\theta}z)$, $\theta\in\mathbb{R}$ when $f\in\mathcal{S}$, we have
\begin{align*}
	H_{2,1}(F_{f_{\theta}}/2)=\frac{e^{4i\theta}}{48}\left(a^4_2 - 12 a^2_3 + 12 a_2 a_4\right)=e^{4i\theta}H_{2,1}(F_{f}/2).
\end{align*}\vspace{2mm}

Let $\mathcal{P}$ be the class of all analytic functions $p$ in the unit disk $\mathbb{D}$ satisfying $p(0)=1$ and $\mbox{Re}\;p(z)>0$ for $z\in\mathbb{D}$. Therefore, every $p\in\mathcal{P}$ can be represented as
\begin{align}\label{eq-2.4}
	p(z)=1+\sum_{n=1}^{\infty}c_n z^n,\; z\in\mathbb{D}.
\end{align}
Elements of the class $\mathcal{P}$ are called  \textit{Carath$\acute{e}$odory functions}. It is well-known that $|c_n|\leq 2$, $n\geq 1$ for a functions $p\in\mathcal{P}$ (see \cite{Duren-1983-NY}). The Carath$\acute{e}$odory class $\mathcal{P}$ and it's coefficients bound play a significant role in establishing the bound of Hankel determinants.\vspace{2mm}

Now, we state some lemmas, which will be useful to establish our main results. Parametric representations of the coefficients are often useful in finding the bound for Hankel determinants, and in this regard, Libera and Zlotkiewicz (see \cite{Libera-Zlotkiewicz-PAMS-1982, Libera-Zlotkiewicz-PAMS-1983}) obtained the parameterizations of possible values of $c_2$ and $c_3$, which are Taylor coefficients for functions with positive real part.\vspace{2mm}

\begin{LemA}\cite{Libera-Zlotkiewicz-PAMS-1982,Libera-Zlotkiewicz-PAMS-1983}
If $p\in\mathcal{P}$ is of the form \eqref{eq-2.4} with $c_1\geq 0$, then 
\begin{align}\label{eq-2.5}
	&c_1=2\tau_1,\\\label{eq-2.6} &c_2=2\tau^2_1 +2(1-\tau^2_1)\tau_2
\end{align}
and
\begin{align}\label{eq-2.7}
	c_3= 2\tau^3_1  + 4(1 -\tau^2_1)\tau_1\tau_2 - 2(1 - \tau^2_1)\tau_1\tau^2_2 + 2(1 - \tau^2_1)(1 - |\tau_2|^2)\tau_3
\end{align}
for some $\tau_1\in[0,1]$ and $\tau_2,\tau_3\in\overline{\mathbb{D}}:= \{z\in\mathbb{C}:|z|\leq 1\}$.\vspace{1.2mm}
	
For $\tau_1\in\mathbb{T}:=\{z\in\mathbb{C}:|z|=1\}$, there is a unique function $p\in\mathcal{P}$ with $c_1$ as in \eqref{eq-2.5}, namely
\begin{align*}
	p(z)=\frac{1+\tau_1 z}{1-\tau_1 z}, \;\;z\in\mathbb{D}.
\end{align*}
	
For $\tau_1\in\mathbb{D}$ and $\tau_2\in\mathbb{T}$, there is a unique function $p\in\mathcal{P}$ with $c_1$ and $c_2$ as in \eqref{eq-2.5} and \eqref{eq-2.6}, namely
\begin{align*}
	p(z)=\frac{1+(\overline{\tau_1}\tau_2 +\tau_1)z+\tau_2 z^2}{1 +(\overline{\tau_1}\tau_2 -\tau_1)z-\tau_2 z^2}, \;\;z\in\mathbb{D}.
\end{align*}
	
For $\tau_1,\tau_2\in\mathbb{D}$ and $\tau_3\in\mathbb{T}$, there is a unique function $p\in\mathcal{P}$ with $c_1,c_2$ and $c_3$ as in \eqref{eq-2.5}--\eqref{eq-2.7}, namely
\begin{align*}
	p(z)=\frac{1+(\overline{\tau_2}\tau_3+\overline{\tau_1}\tau_2 +\tau_1)z +(\overline{\tau_1}\tau_3+ \tau_1\overline{\tau_2}\tau_3 +\tau_2)z^2 +\tau_3 z^3}{1 +(\overline{\tau_2}\tau_3+ \overline{\tau_1}\tau_2 -\tau_1)z +(\overline{\tau_1}\tau_3- \tau_1\overline{\tau_2}\tau_3 -\tau_2)z^2 -\tau_3 z^3}, \;\;z\in\mathbb{D}.
\end{align*}
\end{LemA}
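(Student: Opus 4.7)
The plan is to derive Lemma A from the classical Schur algorithm applied to the Schwarz function naturally associated with a Carath\'eodory function. First, I would pass from $p \in \mathcal{P}$ to
$$\omega(z) := \frac{p(z) - 1}{p(z) + 1}, \quad z \in \mathbb{D},$$
which, because $\operatorname{Re} p > 0$ on $\mathbb{D}$, maps the unit disk into itself with $\omega(0) = 0$. Comparing power series in the identity $\omega(p+1) = p-1$ gives $\omega(z) = (c_1/2)z + O(z^2)$. By the Schwarz lemma, $\omega_1(z) := \omega(z)/z$ is analytic on $\mathbb{D}$ with $\sup_{\mathbb{D}} |\omega_1| \leq 1$, and I set $\tau_1 := \omega_1(0) = c_1/2$. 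The hypothesis $c_1 \geq 0$ then forces $\tau_1 \in [0,1]$ and produces \eqref{eq-2.5}.

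Next I would iterate. For $|\tau_1| < 1$ the function
$$\omega_2(z) := \frac{1}{z}\cdot\frac{\omega_1(z) - \tau_1}{1 - \overline{\tau_1}\,\omega_1(z)}$$
is again an analytic self-map of $\overline{\mathbb{D}}$ fixing the origin (removability of the singularity at $0$ follows from $\omega_1(0) = \tau_1$ combined with the maximum principle), and setting $\tau_2 := \omega_2(0)$ gives $\tau_2 \in \overline{\mathbb{D}}$; repeating the step on $\omega_2$ produces $\tau_3 \in \overline{\mathbb{D}}$. Inverting the two Schur steps expresses $\omega_1$ up to order $z^2$ as an explicit rational function of $\tau_1, \tau_2, \tau_3$. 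Writing $p(z) = (1 + \omega(z))/(1 - \omega(z)) = 1 + 2\sum_{k \geq 1}\omega(z)^k$ and reading off the coefficients of $z, z^2, z^3$ yields \eqref{eq-2.6} and \eqref{eq-2.7}. The main obstacle is the algebraic bookkeeping: the clean factors $(1-\tau_1^2)$ and $(1 - |\tau_2|^2)$ appear only after a cascade of M\"obius cancellations, and one must keep track of the conjugation patterns carefully, but nothing more subtle than Schwarz--Pick enters.

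Finally, the three uniqueness statements follow from the equality case of the Schwarz lemma at successive levels of the iteration. If $\tau_1 \in \mathbb{T}$, then $\omega_1$ is forced to be the constant $\tau_1$, so $\omega(z) = \tau_1 z$ and $p$ is uniquely the M\"obius function $(1 + \tau_1 z)/(1 - \tau_1 z)$. If $\tau_1 \in \mathbb{D}$ but $\tau_2 \in \mathbb{T}$, the algorithm terminates one step later: $\omega_2 \equiv \tau_2$, so $\omega_1$ is the M\"obius transform of $\tau_2 z$ corresponding to $\tau_1$, and substituting the resulting $\omega(z) = z\omega_1(z)$ into $p = (1 + \omega)/(1 - \omega)$ and clearing denominators yields the explicit rational formula in the statement. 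The case $\tau_1,\tau_2 \in \mathbb{D}$ and $\tau_3 \in \mathbb{T}$ is handled identically with one additional iteration; the simplification is the most tedious computation in the proof, but it is routine and follows the same template as the previous case.
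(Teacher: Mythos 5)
The paper does not actually prove this statement: Lemma A is quoted verbatim from Libera--Zlotkiewicz \cite{Libera-Zlotkiewicz-PAMS-1982,Libera-Zlotkiewicz-PAMS-1983}, so there is no in-paper argument to compare against. Judged on its own, your Schur-algorithm derivation is a correct and standard route, and it genuinely differs from the classical one: Libera and Zlotkiewicz obtain the parametrizations of $c_2$ and $c_3$ from the Carath\'eodory--Toeplitz positivity conditions (equivalently from known representations such as $2c_2=c_1^2+\zeta(4-c_1^2)$, $|\zeta|\le 1$), whereas your route pushes everything through the Schwarz function $\omega=(p-1)/(p+1)$ and successive Schur steps. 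I checked the bookkeeping you defer: with $\tau_1$ real one gets $\omega(z)=\tau_1 z+(1-\tau_1^2)\tau_2 z^2+(1-\tau_1^2)\bigl[(1-|\tau_2|^2)\tau_3-\tau_1\tau_2^2\bigr]z^3+\cdots$, and expanding $p=1+2\omega+2\omega^2+2\omega^3+\cdots$ reproduces exactly \eqref{eq-2.5}--\eqref{eq-2.7}; the uniqueness assertions do follow from the maximum principle forcing $\omega_1\equiv\tau_1$ (resp.\ $\omega_2\equiv\tau_2$, $\omega_3\equiv\tau_3$) when the relevant parameter is unimodular, and unwinding the M\"obius steps gives precisely the three rational functions in the statement. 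What your approach buys is a self-contained proof in which the extremal (uniqueness) cases fall out of the same mechanism, rather than being appended afterwards as in the literature. Two small repairs: $\omega_2$ as you define it does not fix the origin (its value there is $\tau_2$); it is the M\"obius composite $(\omega_1-\tau_1)/(1-\overline{\tau_1}\omega_1)$ that vanishes at $0$, which is what justifies dividing by $z$. And for the existence part you should say a word about the degenerate cases $|\tau_1|=1$ or $|\tau_2|=1$, where the next Schur step is unavailable: there $\omega_1$ (resp.\ $\omega_2$) is a unimodular constant, the later parameters may be chosen arbitrarily in $\overline{\mathbb{D}}$, and \eqref{eq-2.6}--\eqref{eq-2.7} still hold because the factors $1-\tau_1^2$, $1-|\tau_2|^2$ kill the corresponding terms.
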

The next lemma is a special case of more general results due to Choi \textit{et al.} \cite{Cho-Kim-Sugawa-JMSJ-2007} (see also \cite{Ohno-Sugawa-KJM-2018}). We will also apply the following lemma. Here $\overline{\mathbb{D}}:= \{z\in\mathbb{C}:|z|\leq 1\}$. A more general and symmetric problem was considered in \cite{Cho-Kim-Sugawa-JMSJ-2007}. Let 
\begin{align*}
	\Omega(A,B,K,L,M)=\max_{u,v\in\overline{\mathbb{D}}}\{|A|(1-|u|^2) +|B|(1-|v|^2)+|Ku^2 +2Muv+Lv^2|\}
\end{align*}
for $A, B, K, L, M\in\mathbb{C}$. When $K, L, M$ are real numbers, the value of $\Omega(A,B,K,L,M)$ was compute in \cite[Theorem 3.1]{Cho-Kim-Sugawa-JMSJ-2007}. By virtue of the maximum modulus principle, one can see that
\begin{align*}
	\Omega(1,0,c,a,b/2)=\max_{u\in\overline{\mathbb{D}}, v\in\partial\mathbb{D}}\{(1-|u|^2) +|cu^2 +buv+av^2|\}=Y(a,b,c).
\end{align*}
As an immediate consequence of \cite[Theorem 3.1]{Cho-Kim-Sugawa-JMSJ-2007}, the following result was obtained.
\begin{LemB}\cite[Proposition 6]{Cho-Kim-Sugawa-JMSJ-2007}
Let $A, B, C$ be real numbers and
\begin{align*}
	Y(A,B,C):=\max\{|A+ Bz +Cz^2| +1-|z|^2: z\in\overline{\mathbb{D}}\}.
\end{align*}
\noindent{(i)} If $AC\geq 0$, then
\begin{align*}
	Y(A,B,C)=\begin{cases}
		|A|+|B|+|C|, \;\;\;\;\;\;\;\;\;\;\;\;\;|B|\geq 2(1-|C|), \vspace{2mm}\\ 1+|A|+\dfrac{B^2}{4(1-|C|)}, \;\;\;\;\;|B|< 2(1-|C|).
	\end{cases}
\end{align*}
\noindent{(ii)} If $AC<0$, then
\begin{align*}
	Y(A,B,C)=\begin{cases}
		1-|A|+\dfrac{B^2}{4(1-|C|)}, \;\;\;\;-4AC(C^{-2}-1)\leq B^2\land|B|< 2(1-|C|),\vspace{2mm} \\ 1+|A|+\dfrac{B^2}{4(1+|C|)}, \;\;\;\; B^2<\min\{4(1+|C|)^2,-4AC(C^{-2}-1)\}, \vspace{2mm} \\ R(A,B,C), \;\;\;\;\;\;\;\;\;\;\;\;\;\;\;\;\;\;\; otherwise,
	\end{cases}
\end{align*}
where
\begin{align*}
	R(A,B,C):= \begin{cases}
		|A|+|B|-|C|, \;\;\;\;\;\;\;\;\;\;\;\;\; |C|(|B|+4|A|)\leq |AB|, \vspace{2mm}\\ -|A|+|B|+|C|, \;\;\;\;\;\;\;\;\;\;\; |AB|\leq |C|(|B|-4|A|), \vspace{2mm}\\ (|C| +|A|)\sqrt{1-\dfrac{B^2}{4AC}}, \;\;\; otherwise.
	\end{cases}
\end{align*}
\end{LemB}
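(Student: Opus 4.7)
The plan is to reduce the two-dimensional maximisation defining $Y(A,B,C)$ to a sequence of one-variable optimisations by first optimising over the argument of $z$ for each fixed $r = |z| \in [0,1]$. Writing $z = re^{i\theta}$ and $t := \cos\theta \in [-1,1]$, a direct expansion using $\cos 2\theta = 2t^{2} - 1$ yields
\[|A + Bz + Cz^{2}|^{2} = 4ACr^{2}\,t^{2} + 2Br(A + Cr^{2})\,t + B^{2}r^{2} + (A - Cr^{2})^{2},\]
which for each fixed $r$ is a real quadratic in $t$ whose leading coefficient has the same sign as $AC$. This sign dichotomy is precisely what produces the two top-level cases in the statement. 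Throughout, the symmetries $Y(A,B,C) = Y(-A,-B,-C)$ (from $|-w|=|w|$) and $Y(A,B,C) = Y(A,-B,C)$ (from $z \mapsto -z$) let me assume $A, B \geq 0$ without loss of generality.

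\emph{Case $AC \geq 0$.} Here the $t$-quadratic is convex, so its maximum over $t \in [-1,1]$ is attained at an endpoint and equals $(|A| + |B|r + |C|r^{2})^{2}$. Consequently the outer problem reduces to maximising the explicit function $g(r) := 1 + |A| + |B|r + (|C| - 1)r^{2}$ on $[0,1]$. A direct calculus argument, distinguishing $|C| \geq 1$ versus $|C| < 1$ and, in the latter subcase, whether the unconstrained critical point $r^{*} = |B|/[2(1 - |C|)]$ lies in $[0,1]$, yields the two formulas of case~(i).

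\emph{Case $AC < 0$.} Now the $t$-quadratic is concave, so the interior critical point $t^{*} = -B(A + Cr^{2})/(4ACr)$ becomes an admissible candidate whenever $|t^{*}| \leq 1$; substituting $t^{*}$ produces a third candidate $M(r) = \sqrt{(A - Cr^{2})^{2} + B^{2}r^{2} - B^{2}(A + Cr^{2})^{2}/(4AC)}$ beyond the two endpoint branches $|A \pm Br + Cr^{2}|$. Thus $Y(A,B,C)$ equals the maximum over $r \in [0,1]$ of $|A \pm Br + Cr^{2}| + 1 - r^{2}$ and $M(r) + 1 - r^{2}$ (the last only on the subinterval where $|t^{*}| \leq 1$). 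The main obstacle, and the technical core of the argument, is the bookkeeping that follows: one must solve a finite system of algebraic inequalities to decide, as $(A,B,C)$ varies, which of the three candidates dominates and where its maximum is attained. The thresholds $|B| < 2(1 - |C|)$ and $B^{2} \leq -4AC(C^{-2} - 1)$ in case~(ii) encode respectively whether the interior critical point of a boundary branch lies in $(0,1)$ and whether $M(r)$ remains admissible up to $r = 1$, while the three subcases of $R(A,B,C)$ record which of the boundary-attained values $|A| + |B| - |C|$, $-|A| + |B| + |C|$, and $(|C| + |A|)\sqrt{1 - B^{2}/(4AC)}$ is the overall maximum. Verifying that the listed formula in each region is a genuine global maximum, rather than merely a local extremum of one of the candidate branches, is the crux; the rest of the proof is elementary once the reduction above is in place.
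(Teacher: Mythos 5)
Your reduction is set up correctly: the identity $|A+Bz+Cz^{2}|^{2}=4ACr^{2}t^{2}+2Br(A+Cr^{2})t+B^{2}r^{2}+(A-Cr^{2})^{2}$ with $t=\cos\theta$ is right, the symmetries justify $A,B\ge 0$, and your treatment of the case $AC\ge 0$ (endpoint maximum $(|A|+|B|r+|C|r^{2})^{2}$, then elementary calculus for $g(r)=1+|A|+|B|r+(|C|-1)r^{2}$) is complete and does reproduce part (i). The genuine gap is that part (ii) — which is essentially the whole content of the lemma, including the two threshold conditions and the trichotomy defining $R(A,B,C)$ — is never actually established. You identify the three candidate branches $|A\pm Br+Cr^{2}|+1-r^{2}$ and $M(r)+1-r^{2}$ and then declare the remaining comparison to be ``bookkeeping,'' explicitly deferring what you yourself call the crux. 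Asserting that the stated inequalities ``encode'' which branch wins is not a proof; indeed at least one of your glosses is doubtful: admissibility of the interior critical point $t^{*}$ at $r=1$ reads $B^{2}(A+C)^{2}\le 16A^{2}C^{2}$, which is not the stated threshold $B^{2}\ge -4AC(C^{-2}-1)$ in general, so the claimed interpretation would need to be checked rather than assumed. Note also that the endpoint branches involve absolute values of quadratics that may change sign on $[0,1]$ when $AC<0$, so even the one-variable maximisations there require a further case split you do not address.

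For comparison, the paper does not prove this statement at all: it quotes it verbatim as Proposition 6 of Cho, Kim and Sugawa, where it is obtained as an immediate consequence of their Theorem 3.1 on the more general functional $\Omega(A,B,K,L,M)$, via the observation (maximum modulus principle) that $Y(a,b,c)=\Omega(1,0,c,a,b/2)$. So a from-scratch derivation along your lines is a legitimate route, but as written your proposal only proves part (i); part (ii) remains an outline, and that is precisely the part for which the lemma is cited.
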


\begin{LemC}\cite{Ma-Minda-1994}
Let $p\in\mathcal{P}$ be given by \eqref{eq-2.1}. Then
\begin{align*}
	|c_2 -vc^2_1|\leq\begin{cases}
		-4v +2 \;\;\;\; v<0,\\ 2 \;\;\;\;\;\;\;\;\;\;\;\;\;\; 0\leq v\leq 1, \\ 4v -2 \;\;\;\;\;\;\; v>1.
	\end{cases}
\end{align*}
For $v<0$ or $v>1$, the equality holds if, and only if,
\begin{align*}
	h(z)=\frac{1+z}{1-z}
\end{align*}
or one of its rotations. If $0 < v < 1$, then the equality is true if, and only if,
\begin{align*}
	h(z)=\frac{1+z^2}{1-z^2}
\end{align*}
or one of its rotations.
\end{LemC}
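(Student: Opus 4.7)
The plan is to reduce the claim to the parametric representation supplied by Lemma A and then to a one-variable optimization on $[0,1]$. First I would observe that the functional $p\mapsto |c_2-vc_1^2|$ is rotationally invariant: replacing $p(z)$ by $p(e^{i\theta}z)$ multiplies $c_n$ by $e^{in\theta}$, so $c_2-vc_1^2$ picks up the factor $e^{2i\theta}$ and its modulus is unchanged. Therefore there is no loss of generality in assuming $c_1\geq 0$, which puts us in the setting of Lemma A.

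Next, I would substitute \eqref{eq-2.5} and \eqref{eq-2.6} into $c_2-vc_1^2$ to get
\begin{align*}
c_2-vc_1^2=2(1-2v)\tau_1^{2}+2(1-\tau_1^{2})\tau_2,
\end{align*}
with $\tau_1\in[0,1]$ and $\tau_2\in\overline{\mathbb{D}}$. The triangle inequality together with $|\tau_2|\leq 1$ then yields
\begin{align*}
|c_2-vc_1^2|\leq 2|1-2v|\,\tau_1^{2}+2(1-\tau_1^{2})=2+2\bigl(|1-2v|-1\bigr)\tau_1^{2}.
\end{align*}
Writing $x=\tau_1^{2}\in[0,1]$, this is an affine function of $x$, and the maximum is achieved at $x=0$ when $|1-2v|\leq 1$ and at $x=1$ when $|1-2v|\geq 1$. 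Unwinding the three regimes $v<0$, $0\leq v\leq 1$, $v>1$ recovers exactly the three branches $-4v+2$, $2$, $4v-2$ asserted in the statement.

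Finally I would address the equality statements. In the regime $v<0$ or $v>1$, sharpness forces $x=\tau_1^{2}=1$, i.e.\ $\tau_1\in\{\pm 1\}\subset\mathbb{T}$; by the uniqueness part of Lemma A the extremal $p$ is $(1+\tau_1z)/(1-\tau_1z)$, which after undoing the rotation used in the normalization $c_1\geq 0$ gives $h(z)=(1+z)/(1-z)$ and its rotations. In the regime $0<v<1$ the bound $2$ is attained only when $\tau_1=0$ and $|\tau_2|=1$; Lemma A then identifies $p(z)=(1+\tau_2z^{2})/(1-\tau_2z^{2})$, giving $h(z)=(1+z^{2})/(1-z^{2})$ up to rotation. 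The case analysis is entirely mechanical; the only point requiring a little care is the initial rotational reduction, since Lemma A is stated under the normalization $c_1\geq 0$ and one must verify that the quantity under study is invariant under the corresponding rotation before invoking it.
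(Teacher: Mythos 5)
Your proposal is correct, but note that the paper does not prove this statement at all: Lemma C is imported verbatim from Ma and Minda's 1994 paper as a black-box tool, so there is no in-paper argument to match. Your derivation is a legitimate self-contained proof and it stays entirely inside the paper's own toolkit, since it rests on the Libera--Zlotkiewicz parametrization (Lemma A): after the rotational normalization $c_1\geq 0$ you reduce $|c_2-vc_1^2|$ to the affine bound $2+2(|1-2v|-1)\tau_1^2$ in $x=\tau_1^2$, and the three branches plus the equality analysis (uniqueness of $p$ for $\tau_1\in\mathbb{T}$, respectively for $\tau_1\in\mathbb{D}$, $\tau_2\in\mathbb{T}$) fall out correctly; the classical Ma--Minda proof instead works directly with the Schwarz function $w$ in $p=(1+w)/(1-w)$ and the estimate $|w_2|\leq 1-|w_1|^2$, which is essentially the same information in different clothing. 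The only blemish is cosmetic: under the normalization of Lemma A you have $\tau_1\in[0,1]$, so equality in the regime $v<0$ or $v>1$ forces $\tau_1=1$, not $\tau_1\in\{\pm 1\}$; this does not affect the conclusion, since either value yields $(1+z)/(1-z)$ up to the rotation you undo at the end. You might also remark explicitly that the "if" direction of the equality statements is immediate by plugging $c_1=c_2=2$ (respectively $c_1=0$, $|c_2|=2$) into the functional, but the argument as written is sound.
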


\begin{LemD}\cite{Ali-BMMSS-2001}
Let $p\in\mathcal{P}$ be given by \eqref{eq-2.1} with $0\leq B\leq 1$ and $B(2B-1)\leq D\leq B$. Then 
\begin{align*}
	|c_3 -2Bc_1 c_2 +Dc^3_1|\leq 2.
\end{align*}
\end{LemD}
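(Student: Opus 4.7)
My plan is to apply the parametric representation of Lemma A and then invoke Lemma B. Setting $t=\tau_1\in[0,1]$, so that $c_1=2t$, and using \eqref{eq-2.6}--\eqref{eq-2.7} to express $c_2$ and $c_3$ in terms of $t,\tau_2,\tau_3\in\overline{\mathbb{D}}$, substitution into $\Psi:=c_3-2Bc_1c_2+Dc_1^3$ and collection of like terms yields
$$\Psi=2t^3(1-4B+4D)+4t(1-t^2)(1-2B)\tau_2-2t(1-t^2)\tau_2^2+2(1-t^2)\bigl(1-|\tau_2|^2\bigr)\tau_3.$$

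Next I would maximize over $\tau_3\in\overline{\mathbb{D}}$ via the identity $\max_{|\tau_3|\le 1}|u+v\tau_3|=|u|+|v|$, which produces
$$|\Psi|\le 2\Bigl|\,t^3(1-4B+4D)+2t(1-t^2)(1-2B)\tau_2-t(1-t^2)\tau_2^2\,\Bigr|+2(1-t^2)\bigl(1-|\tau_2|^2\bigr).$$
The target estimate $|\Psi|\le 2$ is therefore equivalent, for $t\in(0,1)$, to the bound $Y(\alpha,\beta,\gamma)\le 1/(1-t^2)$, where, after dividing the bracketed quantity by $1-t^2$ and relabelling the parameters of Lemma B by $\alpha,\beta,\gamma$ (to avoid a collision with the hypothesis letter $B$), one reads off
$$\alpha=\frac{t^3(1-4B+4D)}{1-t^2},\qquad \beta=2t(1-2B),\qquad \gamma=-t,$$
all of which are real, so Lemma B is applicable.

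A preparatory observation is that the hypotheses $0\le B\le 1$ and $B(2B-1)\le D\le B$ force $|1-4B+4D|\le 1$ and $|1-2B|\le 1$. Indeed, $D\le B$ gives $1-4B+4D\le 1$, while $D\ge B(2B-1)=2B^2-B$ gives $1-4B+4D\ge 8B^2-8B+1=2(2B-1)^2-1\ge -1$ on $B\in[0,1]$. Armed with these uniform bounds on the parameters, one may then process both regimes $\alpha\gamma\ge 0$ and $\alpha\gamma<0$ of Lemma B; in each branch the desired estimate reduces, after clearing denominators, to a polynomial inequality in $t\in[0,1)$ that should be valid throughout the admissible region in $(B,D)$. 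The endpoints $t=0$, where $|\Psi|=|c_3|\le 2$ holds by Carath\'eodory, and $t=1$, where $|\Psi|=2|1-4B+4D|\le 2$, furnish sanity checks and identify the equality configurations.

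The principal obstacle will be the bookkeeping in the $R(\alpha,\beta,\gamma)$ branch of Lemma B, which itself splits into three subcases determined by comparing $|\gamma|(|\beta|+4|\alpha|)$ with $|\alpha\beta|$. In that branch the resulting polynomial inequality is of degree four in $t$ and depends sensitively on both $B$ and $D$, so one has to match the boundary of the admissible region (in particular $D=B$, which produces the equality $|\Psi|=2$ at $t=1$) with the extremal choices of $\tau_2,\tau_3$ to close the argument uniformly.
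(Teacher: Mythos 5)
First, a point of reference: the paper does not prove this statement at all --- Lemma D is quoted verbatim from \cite{Ali-BMMSS-2001}, so there is no internal proof to compare with, and your argument has to stand on its own as a complete proof. It does not. What you have is a correct setup (the substitution of \eqref{eq-2.5}--\eqref{eq-2.7} into $c_3-2Bc_1c_2+Dc_1^3$ is right, as is the reduction, after maximizing over $\tau_3$ and dividing by $1-\tau_1^2$, to the bound $Y(\alpha,\beta,\gamma)\le 1/(1-t^2)$ with $\alpha=t^3(1-4B+4D)/(1-t^2)$, $\beta=2t(1-2B)$, $\gamma=-t$), followed by the announcement that in each branch of Lemma B the resulting polynomial inequality ``should be valid throughout the admissible region in $(B,D)$.'' That verification is the entire substance of the lemma, and it is missing.

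Moreover, the preparatory bounds you offer, $|1-4B+4D|\le 1$ and $|1-2B|\le 1$, are demonstrably not enough to close even the simplest branch, so the gap is not cosmetic. In the branch $Y=|\alpha|+|\beta|+|\gamma|$ the required inequality becomes $t^3|1-4B+4D|+2t(1-t^2)|1-2B|+t(1-t^2)\le 1$; estimating the parameters by $1$ gives the left side $3t-2t^3$, which exceeds $1$ for, say, $t=0.7$. Hence the conclusion can only follow by exploiting the entrance condition of that branch ($t|1-2B|\ge 1-t$) together with the joint constraint $B(2B-1)\le D\le B$, and similarly in the two subcases of case (ii) and the three subcases of $R(\alpha,\beta,\gamma)$ --- precisely the bookkeeping you defer. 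Until those case-by-case inequalities are actually established uniformly in $t\in(0,1)$ and in $(B,D)$, the proposal is a plausible plan, not a proof; the honest route here is either to carry out that analysis in full or to do what the paper does and cite \cite{Ali-BMMSS-2001}, where the estimate is proved (by a related but differently organized maximization over the Libera--Zlotkiewicz parameters).
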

\begin{LemE}\cite{Ravichandran-Verma-CRMAS-2015}
Let $p\in\mathcal{P}$ be given by \eqref{eq-2.4}. If $\alpha,\beta, \gamma$ and $\lambda$ satisfying the conditions $0<\alpha<1$, $0 <\lambda<1$ and
\begin{align*}
	8\lambda(1-\lambda)\{(\alpha\beta -2\gamma)^2 &+(\alpha(\lambda+\alpha)-\beta)^2\} \\&+\alpha(1-\alpha)(\beta -2\lambda \alpha)^2 \leq 4 \alpha^2(1-\alpha)^2\lambda(1-\lambda),
\end{align*}
then 
\begin{align*}
	\vline\;\gamma c^4_1 +\lambda c^2_2 +2\alpha c_1 c_3 -\frac{3}{2}\beta c^2_1 c_2 -c_4\;\vline\leq 2.
\end{align*}
\end{LemE}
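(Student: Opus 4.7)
The plan is to reduce the estimate to a bound over the Libera--Zlotkiewicz parameters $\tau_1,\tau_2,\tau_3,\tau_4$ and then exploit the hypothesis on $(\alpha,\beta,\gamma,\lambda)$ as an algebraic majorization condition. First, I would observe that the class $\mathcal{P}$ is rotation-invariant in the sense $p(z)\mapsto p(e^{i\theta}z)$, and under this rotation the functional
\[
\Phi(p):=\gamma c_1^4+\lambda c_2^2+2\alpha c_1 c_3-\tfrac{3}{2}\beta c_1^2 c_2-c_4
\]
picks up a factor $e^{4i\theta}$. Hence $|\Phi(p)|$ is rotation-invariant, and we may assume $c_1\geq 0$, that is $\tau_1\in[0,1]$. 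This puts us exactly in the setting of Lemma~A.

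Next I would substitute the formulas \eqref{eq-2.5}--\eqref{eq-2.7} for $c_1,c_2,c_3$, together with the analogous Schur--Carath\'eodory formula expressing $c_4$ in terms of $\tau_1,\tau_2,\tau_3$ and one extra parameter $\tau_4\in\overline{\mathbb{D}}$, into $\Phi(p)$. The parameter $\tau_4$ enters $\Phi$ only through $c_4$ with coefficient $-2(1-\tau_1^2)(1-|\tau_2|^2)(1-|\tau_3|^2)$, so by the triangle inequality and $|\tau_4|\leq 1$ we may eliminate it at the cost of a term of the form $2(1-\tau_1^2)(1-|\tau_2|^2)(1-|\tau_3|^2)$. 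After this reduction and collection of terms by powers of $\tau_3$, the remaining expression is affine-linear in $\tau_3$ (with coefficient proportional to $(1-\tau_1^2)(1-|\tau_2|^2)$), and can again be bounded by taking $|\tau_3|\leq 1$. What is left is a function $\Psi(\tau_1,\tau_2)$ on $[0,1]\times\overline{\mathbb{D}}$.

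The heart of the argument is showing $\Psi(\tau_1,\tau_2)\leq 2$ under the given hypothesis on $(\alpha,\beta,\gamma,\lambda)$. I would write $\Psi$ as $2+(1-\tau_1^2)Q(\tau_1,\tau_2)$ where $Q$ is a real-valued expression polynomial in $|\tau_2|^2$ and in the real/imaginary parts of $\tau_2$; the expected structure is
\[
Q(\tau_1,\tau_2)=-A(\tau_1)+B(\tau_1)\,\mathrm{Re}(\tau_2)+\widetilde B(\tau_1)\,\mathrm{Im}(\tau_2)+C(\tau_1)|\tau_2|^2,
\]
with the coefficients $A,B,\widetilde B,C$ built out of the combinations $\alpha\beta-2\gamma$, $\alpha(\lambda+\alpha)-\beta$, $\beta-2\lambda\alpha$, $\lambda(1-\lambda)$, and $\alpha(1-\alpha)$ that already appear in the hypothesis. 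Maximizing the quadratic-in-$\tau_2$ expression over the closed disk and requiring the maximum to be $\leq 0$ reduces exactly to the stated inequality; the left-hand side of the hypothesis is precisely (up to positive scaling) the discriminant of the worst-case quadratic, and the right-hand side is what controls the diagonal term.

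The main obstacle will be the bookkeeping in steps two and three: identifying the right grouping of the many cross-terms so that the single scalar constraint captures the joint extremal condition in $\tau_2,\tau_3$. Once that grouping is found, the argument becomes a straightforward completion-of-the-square together with $|\tau_j|\leq 1$. I would finish by checking that equality propagates to a recognizable extremal function (the rotations of $z\mapsto (1+z)/(1-z)$ and of $z\mapsto(1+z^2)/(1-z^2)$) to confirm sharpness, in line with the Ma--Minda type lemmas already invoked as Lemmas~C and~D.
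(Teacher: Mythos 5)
You are proposing a proof of Lemma E, but note that the paper itself does not prove this statement at all: it is quoted verbatim from Ravichandran and Verma \cite{Ravichandran-Verma-CRMAS-2015} and used as a black box, so there is no in-paper argument to compare yours against; your attempt has to stand on its own as a reconstruction of the cited result.

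Judged on its own, the sketch has a genuine gap at exactly the point you call ``the heart of the argument.'' First, the structural claims made after substituting the Libera--Zlotkiewicz/Schur formulas are not correct: the expression is \emph{not} affine-linear in $\tau_3$, because the analogue of \eqref{eq-2.7} for $c_4$ contains a term proportional to $(1-\tau_1^2)(1-|\tau_2|^2)\,\overline{\tau_2}\,\tau_3^{\,2}$ in addition to the $\tau_4$-term, so after discarding $\tau_4$ you are left with a genuinely quadratic dependence on $\tau_3$ (the kind of situation that requires a Cho--Kim--Sugawa type maximization as in Lemma B, not a plain triangle inequality). Likewise, the reduced quantity is not of the form $-A+B\,\mathrm{Re}(\tau_2)+\widetilde B\,\mathrm{Im}(\tau_2)+C|\tau_2|^2$: the terms $\lambda c_2^2$, $2\alpha c_1c_3$ and $c_4$ produce $\tau_2^{\,2}$ and $\tau_2^{\,3}$ contributions, so $Q$ is a higher-degree polynomial in $\tau_2,\overline{\tau_2}$ and the advertised ``completion of the square'' does not apply as stated. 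Second, the decisive assertion --- that maximizing over $\tau_2$ (and $\tau_3$) ``reduces exactly to the stated inequality,'' with the hypothesis being ``precisely the discriminant of the worst-case quadratic'' --- is exactly the content of the lemma and is given no derivation; the hypothesis packages three distinct squared combinations of $(\alpha,\beta,\gamma,\lambda)$ and does not arise as the discriminant of a single quadratic in any evident way, so this step cannot be waved through. (Minor points: the initial rotation normalization $c_1\ge 0$ is fine since $\alpha,\beta,\gamma,\lambda$ are real, and the closing sharpness discussion is unnecessary because the lemma as stated asserts only the inequality.) As it stands, the proposal is a plausible strategy outline rather than a proof, and its two concrete technical claims (affine-linearity in $\tau_3$, affine form in $\tau_2$) are false and would need to be replaced by a genuine two-variable maximization argument before the stated parameter condition could be extracted.
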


\section{\bf Logarithmic coefficients for the Class $\mathcal{BT_\mathfrak{B}}$:} The significance of logarithmic coefficients in geometric function theory has led to a growing interest in finding sharp bound of logarithmic coefficients and the Hankel determinants with these coefficients. We obtain the following sharp bound of logarithmic coefficients for the class $\mathcal{BT_\mathfrak{B}}$.

\begin{theorem}
Let $f(z)=z+a_2z^2+a_3z^3+\cdots\in\mathcal{BT_\mathfrak{B}}$ and $ \gamma_{1}, \gamma_{2}, \gamma_{3}, \gamma_{4}$ are given by \eqref{eq-2.2}. Then we have 
\begin{align*}
	|\gamma_n|\leq \frac{1}{4(n+1)}\;\;\;\;\mbox{for}\;\;n=1,2,3,4.
\end{align*}
The inequality is sharp for the following functions:
\begin{align}\label{eq-2.8}
	f_1(z)=\int_{0}^{z}\left(\sqrt{1+\tanh t}\right) dt =z +\frac{z^2}{4} -\frac{z^3}{24} -\frac{5z^4}{192} +\frac{17z^5}{1920}+\cdots.
\end{align}
\begin{figure}[!htb]
	\begin{center}
		\subfloat[Unit disk $z$-plane]{
			\includegraphics[width=0.42\linewidth]{Graph-01}}
		\hspace{1cm}
		\subfloat[$w$-plane]{
			\includegraphics[width=0.48\linewidth]{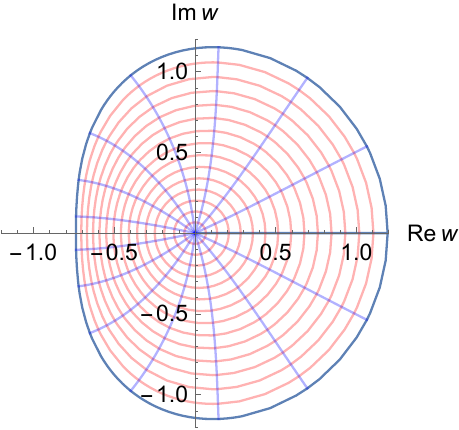}}
	\end{center}
	\caption{Pictorial representation of $f_1(z)=z +\frac{z^2}{4} -\frac{z^3}{24} -\frac{5z^4}{192} +\cdots$.}
\end{figure}
\begin{align}\label{eq-2.9}
	f_2(z)=\int_{0}^{z}\left(\sqrt{1+\tanh t^2}\right) dt =z +\frac{z^3}{6} -\frac{z^5}{40}-\frac{5z^7}{336}+\cdots.
\end{align}
\begin{figure}[!htb]
	\begin{center}
		\subfloat[Unit disk $z$-plane]{
			\includegraphics[width=0.42\linewidth]{Graph-01}}
		\hspace{1cm}
		\subfloat[$w$-plane]{
			\includegraphics[width=0.48\linewidth]{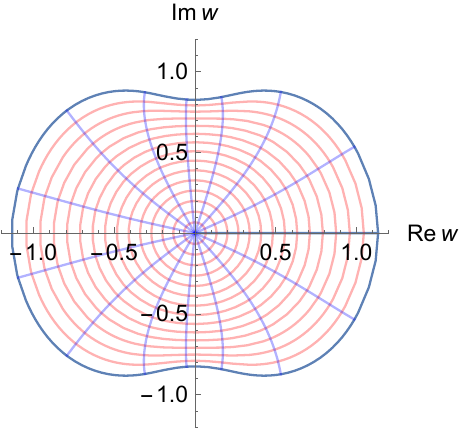}}
	\end{center}
	\caption{Pictorial representation of $f_2(z)=z +\frac{z^3}{6} -\frac{z^5}{40}-\frac{5z^7}{336}+\cdots$.}
\end{figure}
\begin{align}\label{eq-2.10}
	f_3(z)=\int_{0}^{z}\left(\sqrt{1+\tanh t^3}\right) dt =z +\frac{z^4}{8} -\frac{z^7}{56}+\cdots.
\end{align}
\begin{figure}[!htb]
	\begin{center}
		\subfloat[Unit disk $z$-plane]{
			\includegraphics[width=0.42\linewidth]{Graph-01}}
		\hspace{1cm}
		\subfloat[$w$-plane]{
			\includegraphics[width=0.48\linewidth]{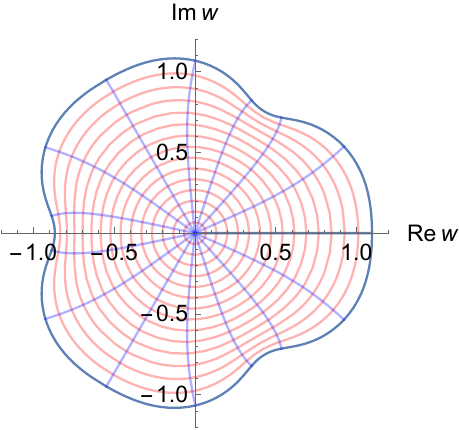}}
	\end{center}
	\caption{Pictorial representation of $f_3(z)=z +\frac{z^4}{8} -\frac{z^7}{56}+\cdots$.}
\end{figure}
\begin{align}\label{eq-2.11}
	f_4(z)=\int_{0}^{z}\left(\sqrt{1+\tanh t^4}\right) dt =z +\frac{z^5}{10} -\frac{z^9}{72}+\cdots.
\end{align}
\begin{figure}[!htb]
	\begin{center}
		\subfloat[Unit disk $z$-plane]{
			\includegraphics[width=0.42\linewidth]{Graph-01}}
		\hspace{1cm}
		\subfloat[$w$-plane]{
			\includegraphics[width=0.48\linewidth]{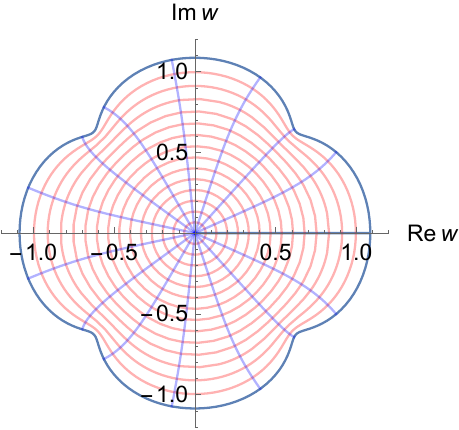}}
	\end{center}
	\caption{Pictorial representation of $f_4(z)=z +\frac{z^5}{10} -\frac{z^9}{72}+\cdots$.}
\end{figure}
\end{theorem}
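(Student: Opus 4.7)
Since $f\in\mathcal{BT_\mathfrak{B}}$ means $f'(z)\prec\mathfrak{B}(z)$, by Definition~\ref{def-1.1} there is a Schwarz function $w$ with $f'(z)=\mathfrak{B}(w(z))$. First I would expand
\begin{align*}
\mathfrak{B}(z)=\sqrt{1+\tanh z}=1+\frac{1}{2}z-\frac{1}{8}z^2-\frac{5}{48}z^3+\frac{17}{384}z^4+\cdots,
\end{align*}
write $w(z)=w_1z+w_2z^2+w_3z^3+w_4z^4+\cdots$ with $|w_j|$ constrained by Schur, and match coefficients against $f'(z)=1+2a_2z+3a_3z^2+4a_4z^3+5a_5z^4+\cdots$ to obtain $a_2,\dots,a_5$ as polynomials in $w_1,\dots,w_4$. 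Then I would pass to a Carathéodory function by setting $p=(1+w)/(1-w)\in\mathcal{P}$ with $p(z)=1+\sum c_nz^n$, invert the relations $c_1=2w_1$, $c_2=2w_2+2w_1^2$, $c_3=2w_3+4w_1w_2+2w_1^3$, $c_4=2w_4+4w_1w_3+2w_2^2+6w_1^2w_2+2w_1^4$, and substitute everything into \eqref{eq-2.2} to get $\gamma_1,\gamma_2,\gamma_3,\gamma_4$ as polynomials in $c_1,c_2,c_3,c_4$.

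Carrying out the algebra I expect expressions of the shape
\begin{align*}
\gamma_1=\frac{c_1}{16},\qquad \gamma_2=\frac{1}{24}\bigl(c_2-c_1^2\bigr),\qquad \gamma_3=\frac{1}{32}\bigl(c_3-2Bc_1c_2+Dc_1^3\bigr),
\end{align*}
and a corresponding four-term polynomial of the form $\gamma_4=\tfrac{1}{K}\bigl(c_4-2\alpha c_1c_3-\lambda c_2^2+\tfrac{3}{2}\beta c_1^2c_2-\gamma c_1^4\bigr)$ with explicit rational constants. The bound $|\gamma_1|\le 1/8$ is then immediate from $|c_1|\le 2$. For $\gamma_2$, I apply Lemma C with $v=1$ to get $|c_2-c_1^2|\le 2$, hence $|\gamma_2|\le 1/12$. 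For $\gamma_3$ I apply Lemma D, which requires a finite numerical check that the values of $B$ and $D$ obtained satisfy $0\le B\le 1$ and $B(2B-1)\le D\le B$; on passing the check, $|\gamma_3|\le 2/32=1/16$. For $\gamma_4$ I appeal to Lemma E with the parameters $(\alpha,\beta,\gamma,\lambda)$ read off from the expression above, after verifying the inequality
\begin{align*}
8\lambda(1-\lambda)\bigl\{(\alpha\beta-2\gamma)^2+(\alpha(\lambda+\alpha)-\beta)^2\bigr\}+\alpha(1-\alpha)(\beta-2\lambda\alpha)^2\le 4\alpha^2(1-\alpha)^2\lambda(1-\lambda).
\end{align*}

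For sharpness I would take the Schwarz function $w(z)=z^n$ for $n=1,2,3,4$; then $f'(z)=\mathfrak{B}(z^n)$ integrates to $f_n(z)=\int_0^z\sqrt{1+\tanh t^n}\,dt$, the precise functions \eqref{eq-2.8}--\eqref{eq-2.11}. For this $f_n$ all of $a_2,\dots,a_n$ vanish, $a_{n+1}=\tfrac{1}{2(n+1)}$, so $\gamma_n=a_{n+1}/2=\tfrac{1}{4(n+1)}$, matching the claimed upper bound. The main obstacle is the case $n=4$: producing $a_5$ in terms of $c_1,\dots,c_4$ involves substantial bookkeeping, and, more importantly, the feasibility condition of Lemma E is a non-trivial polynomial inequality in the four rational parameters extracted from the formula for $\gamma_4$, which must be verified by a careful (but elementary) numerical calculation. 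A lesser but genuine difficulty is that the constants in $\gamma_3$ must land inside the permissible rectangle for Lemma D; this is precisely why the naive triangle inequality, which would yield a strictly worse bound, cannot be used.
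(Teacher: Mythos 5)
Your plan is essentially the paper's own proof: subordination gives $f'(z)=\mathfrak{B}(\omega(z))$, the coefficients $a_2,\dots,a_5$ are written in terms of the Carathéodory coefficients $c_1,\dots,c_4$, and then $|\gamma_1|$ follows from $|c_1|\le 2$, $|\gamma_2|$ from Lemma C, $|\gamma_3|$ from Lemma D with $B=\tfrac{17}{24}$, $D=\tfrac{7}{16}$, and $|\gamma_4|$ from Lemma E with $(\gamma,\lambda,\alpha,\beta)=\bigl(\tfrac{8857}{36864},\tfrac{25}{36},\tfrac{37}{64},\tfrac{1363}{1728}\bigr)$, whose feasibility condition the paper verifies numerically exactly as you anticipate, with sharpness coming from $\omega(z)=z^n$, i.e.\ the functions $f_1,\dots,f_4$. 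The only small inaccuracy is your guessed form of $\gamma_2$: the correct expression is $\gamma_2=\tfrac{1}{24}\bigl(c_2-\tfrac{23}{32}c_1^2\bigr)$ rather than $v=1$, but since $\tfrac{23}{32}\in[0,1]$ Lemma C yields the same bound $|\gamma_2|\le\tfrac{1}{12}$, so the argument is unaffected.
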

\begin{proof}
Let $f\in\mathcal{BT_\mathfrak{B}}$. Then there exists an analytic function $\omega\in\mathcal{H}$ with $\omega(0)=0$ and $|\omega(z)|<1$ for $z\in\mathbb{D}$, such that 
\begin{align}\label{eq-2.12}
	f^{\prime}(z)=\mathfrak{B}(\omega(z))=\sqrt{1+\tanh(\omega(z))},\;\; z\in\mathbb{D}.
\end{align}
Let $p\in\mathcal{P}$. Then, it can be represented using the Schwarz function as $\omega$ by
\begin{align*}
	p(z)=\frac{1+\omega(z)}{1-\omega(z)}=1+c_1z+c_2z^2+\cdots,\; z\in\mathbb{D}.
\end{align*} 
Hence, it is evident that
\begin{align}\label{eq-2.13}
	\omega(z)&=\nonumber\frac{p(z)-1}{p(z)+1}\\&\nonumber=\frac{c_1}{2}z+\frac{1}{2}\left(c_2-\frac{c_1^2}{2}\right)z^2+\frac{1}{2}\left(c_3-c_1c_2+\frac{c_1^3}{4}\right)z^3\\&\quad+\frac{1}{2}\left(c_4-c_1c_3+\frac{3c_1^2c_2}{4}-\frac{c_2^2}{2}-\frac{c_1^4}{8}\right)z^4+\cdots
\end{align}
in $\mathbb{D}$. Then $p$ is analytic in $\mathbb{D}$ with $p(0)=1$ and has positive real part in $\mathbb{D}$. In view of \eqref{eq-2.12} together with $\mathcal{B}(\omega(z))$, a tedious computation shows that 
\begin{align}\label{eq-2.14}
	\nonumber\sqrt{1+\tanh(\omega(z))}&=1+\frac{1}{4}c_1z+\left( \frac{1}{4}c_2 -\frac{5}{32}c_1^2\right)z^2+ \left(\frac{1}{4} c_3 -\frac{5}{16}c_1c_2 +\frac{31}{384}c^3_1\right)z^3\\&\quad + \left(\frac{1}{4}c_4 -\frac{1}{4}c_1c_3 -\frac{5}{32}c_2^2 + \frac{7}{32}c^2_1 c_2 -\frac{223}{6144}c_1^4\right)z^4+\cdots
\end{align}
and 
\begin{align}\label{eq-2.15}
	f^{\prime}(z)=1+2a_2z+3a_3 z^2 +4a_4 z^3+ 5a_5 z^4+\cdots.
\end{align}
Thus, applying \eqref{eq-2.14} and \eqref{eq-2.15}, we derive from \eqref{eq-2.12} that
\begin{align}\label{eq-2.16}
	\begin{cases}
		a_2=\dfrac{1}{8}c_1,\vspace{2mm}\\
		a_3=\dfrac{1}{12}c_2 - \dfrac{5}{96}c_1^2,\vspace{2mm}\\
		a_4=\dfrac{1}{16}c_3 - \dfrac{5}{64}c_1c_2 + \dfrac{31}{1536}c_1^3,\vspace{2mm}\\ 
		a_5=\dfrac{1}{20}c_4 -\dfrac{1}{20}c_1c_3 -\dfrac{1}{32}c_2^2 + \dfrac{7}{160}c^2_1 c_2 -\dfrac{223}{30720}c_1^4. 
	\end{cases}
\end{align}\vspace{2mm}

\noindent{\bf Sharp bounds of $\gamma_{1}$:} By using \eqref{eq-2.2} and \eqref{eq-2.16}, we obtain
\begin{align}\label{eq-2.17}
	|\gamma_{1}|=\;\vline\frac{1}{2}a_{2}\;\vline=\frac{1}{16}|c_{1}|\leq \frac{1}{8}.
\end{align}
The desired bound is obtained. The function $f_1$, which is defined in \eqref{eq-2.8} gives the sharpness of the inequality \eqref{eq-2.17}. \vspace{2mm}
	
\noindent{\bf Sharp bounds of $\gamma_{2}$:} Using \eqref{eq-2.2} and \eqref{eq-2.16}, we observe that
\begin{align*}
	|\gamma_{2}|&=\;\vline\frac{1}{2} \left(a_{3} -\dfrac{1}{2}a^2_{2} \right)\vline \vspace{2mm} \\&= \;\vline\frac{1}{2}\left( \dfrac{1}{12}c_2 - \dfrac{5}{96}c_1^2 -\frac{1}{2} \left(\frac{1}{8}c_1\right)^2\right)\vline \vspace{2mm} \\&= \frac{1}{24}\;\vline\; c_2-\frac{23}{32}c^2_{1} \;\vline. 
\end{align*}
Hence, from Lemma C, we derive
\begin{align}\label{eq-2.18}
	|\gamma_{2}|\leq\frac{1}{12}.
\end{align}
The function $f_2$, which is defined in \eqref{eq-2.9} gives the sharpness of the inequality \eqref{eq-2.18}.\vspace{2mm}

\noindent{\bf Sharp bounds of $\gamma_{3}$:} By using \eqref{eq-2.2} and \eqref{eq-2.12}, we obtain
\begin{align*}
	|\gamma_{3}| &=\;\vline\frac{1}{2}\left(a_{4}- a_{2}a_{3} +\frac{1}{3}a^3_{2}\right)\vline \vspace{2mm} \\&=\; \vline\frac{1}{1536} \left(21c_1^3- 68 c_1c_2+48c_3\right)\vline \vspace{2mm} \\&=\frac{1}{32}\;\vline\; c_3 -2Bc_1 c_2 +Dc^3_1\;\vline,
\end{align*}
where $B=\frac{17}{24}$ and $D=\frac{7}{16}$.\vspace{2mm}

 Therefore, it is clear that $0\leq B\leq 1$, and the inequality $B(2B-1)\leq D\leq B$ holds. In view of the Lemma D, we obtain
\begin{align}\label{eq-2.19}
	|\gamma_{3}|\leq\frac{1}{16}.
\end{align}
The function $f_3$, defined in \eqref{eq-2.10}, establishes the sharpness of the inequality \eqref{eq-2.19}.\vspace{2mm}

\noindent{\bf Sharp bounds of $\gamma_{4}$:} By employing \eqref{eq-2.2} and \eqref{eq-2.12}, we find that
\begin{align*}
	|\gamma_{4}| &=\;\vline\frac{1}{2}\left(a_{5}- a_{2}a_{4} +a^2_2 a_3  -\frac{1}{2}a^2_{3} -\frac{1}{4}a^4_2\right)\vline \vspace{2mm} \\&=\; \vline\frac{1}{1474560} \left(-8857 c^4_1 +43616c^2_1 c_2 -25600c^2_2 -42624c_1 c_3 +36864 c_4\right)\vline \vspace{2mm} \\&=\frac{1}{40}\;\vline\; \frac{8857}{36864} c^4_1 +\frac{25}{36}c^2_2 +\frac{37}{32}c_1 c_3 -\frac{1363}{1152} c^2_1 c_2 -c_4 \;\vline \\&=\frac{1}{40}\;\vline\;\gamma c^4_1 +\lambda c^2_2 +2\alpha c_1 c_3 -\frac{3}{2}\beta c^2_1 c_2 -c_4\;\vline,
\end{align*}
where
\begin{align*}
	\gamma=\frac{8857}{36864}\;\;\;\; \lambda=\frac{25}{36} \;\;\;\; \alpha=\frac{37}{64} \;\;\;\;\beta=\frac{1363}{1728}.
\end{align*}
Now, a thorough calculation establishes that
\begin{align*}
	8\lambda(1-\lambda)&\{(\alpha\beta -2\gamma)^2 +(\alpha(\lambda+\alpha)-\beta)^2\} \\&+\alpha(1-\alpha)(\beta -2\lambda \alpha)^2 - 4 \alpha^2(1-\alpha)^2\lambda(1-\lambda)= -\frac{22111611107}{495338913792}<0.
\end{align*}
As a result, using Lemma E, we conclude
\begin{align}\label{eq-2.20}
	|\gamma_{4}|\leq\frac{1}{20}.
\end{align}
The function $f_4$, defined in \eqref{eq-2.11}, confirms the sharpness of the inequality \eqref{eq-2.20}. This completes the proof.
\end{proof}\vspace{2mm}

\subsection{\bf Sharp bound of $H_{2,1}(F_f/2)$ for the Class $\mathcal{BT_\mathfrak{B}}$:} We obtain a result finding the sharp bound of the second Hankel determinant $ H_{2,1}(F_f/2) $ with logarithmic coefficients for functions in the class $ \mathcal{BT_\mathfrak{B}}$.
\begin{theorem}\label{Th-2.1}
Let $f\in\mathcal{BT_\mathfrak{B}}$ and has the series representation $f(z)=z+a_2z^2+a_3z^3+\cdots$, and $ \gamma_{1}, \gamma_{2} $, $ \gamma_{3} $ are given by \eqref{eq-2.2}. Then we have 
\begin{align*}
	|H_{2,1}(F_f/2)|:=|\gamma_1\gamma_3-\gamma_2^2|\leq \frac{1}{144}.
\end{align*}
The inequality is sharp for the function $f_2$, which is defined in \eqref{eq-2.9}.
\end{theorem}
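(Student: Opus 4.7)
The plan is to reduce $|H_{2,1}(F_f/2)|$ to a tractable expression in the Carathéodory coefficients $c_1,c_2,c_3$ of the associated function $p=(1+\omega)/(1-\omega)$, apply Lemma A to parametrize, and then invoke Lemma B to obtain a polynomial bound in one real variable. Starting from the identity \eqref{eq-2.3} and substituting the relations \eqref{eq-2.16}, a direct expansion should give the compact form
$$H_{2,1}(F_f/2)=\frac{1}{589824}\bigl(-25\,c_1^4-160\,c_1^2c_2-1024\,c_2^2+1152\,c_1c_3\bigr).$$
Because $H_{2,1}(F_f/2)$ is rotation-invariant, I may assume $c_1\in[0,2]$, and write $c_1=2t$ with $t\in[0,1]$, together with $c_2$ as in \eqref{eq-2.6} and $c_3$ as in \eqref{eq-2.7}, for some $\tau_2,\tau_3\in\overline{\mathbb{D}}$.

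Substituting and collecting terms should recast the expression as
$$589824\,H_{2,1}(F_f/2)=-\bigl[1168\,t^4+256\,t^2(1-t^2)\tau_2+(1-t^2)(4096+512\,t^2)\tau_2^2\bigr]+4608\,t(1-t^2)(1-|\tau_2|^2)\tau_3.$$
Applying the triangle inequality in $\tau_3$ (using $|\tau_3|\leq 1$) and, for $t\in(0,1)$, factoring out the positive quantity $4608\,t(1-t^2)$, I obtain
$$589824\,|H_{2,1}(F_f/2)|\leq 4608\,t(1-t^2)\bigl[\,|A+B\tau_2+C\tau_2^2|+(1-|\tau_2|^2)\,\bigr],$$
where
$$A=\frac{73\,t^3}{288(1-t^2)},\qquad B=\frac{t}{18},\qquad C=\frac{8+t^2}{9t}.$$
The bracketed quantity is precisely $Y(A,B,C)$ from Lemma B. Since $A,B,C>0$ and $C\geq 1$ on $(0,1]$ (indeed $C=1$ only at $t=1$), the inequality $|B|\geq 2(1-|C|)$ is trivially satisfied, so case (i) of Lemma B applies and gives the clean value $Y(A,B,C)=A+B+C$.

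Multiplying out and simplifying should then yield
$$4608\,t(1-t^2)(A+B+C)=400\,t^4-3328\,t^2+4096,$$
and since $400\,t^2-3328<0$ for $t\in[0,1]$, this polynomial is bounded by $4096$ with maximum attained at $t=0$. The endpoint cases $t=0$ and $t=1$ (where the factorization breaks) are verified directly: the right-hand side reduces to $4096|\tau_2|^2\leq 4096$ at $t=0$, and equals $1168\leq 4096$ at $t=1$. Dividing by $589824$ delivers the desired bound $|H_{2,1}(F_f/2)|\leq 1/144$. The principal obstacle is not conceptual but book-keeping: accurately expanding the fourth-order polynomial in $\tau_2,\tau_3$ after the Libera–Zlotkiewicz substitution and confirming that $C\geq 1$ throughout, so that only the simplest branch of Lemma B is required. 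Sharpness is then evident from the extremal function $f_2$ in \eqref{eq-2.9}: the Taylor data $a_2=a_4=0$ and $a_3=1/6$ force $\gamma_1=\gamma_3=0$ and $\gamma_2=1/12$, hence $H_{2,1}(F_{f_2}/2)=-1/144$; in the parametrization this corresponds exactly to the extremal configuration $t=0$, $|\tau_2|=1$.
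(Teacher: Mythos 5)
Your proposal is correct and follows essentially the same route as the paper: the identity $H_{2,1}(F_f/2)=\tfrac{1}{589824}\left(-25c_1^4-160c_1^2c_2-1024c_2^2+1152c_1c_3\right)$, the Libera--Zlotkiewicz parametrization, and Lemma B with the same $A$, $B$, $C$, leading to the same polynomial $400t^4-3328t^2+4096$ (the paper's $25t^4-208t^2+256$ up to the factor $16$) and the same extremal function $f_2$. The only cosmetic differences are your cleaner observation that $C\geq 1$ makes the condition $|B|\geq 2(1-|C|)$ automatic (the paper verifies it by an explicit quadratic) and your explicit separate treatment of the endpoints $t=0,1$, which matches the paper's Cases I and II.
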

\begin{proof}
Let $f\in\mathcal{BT_\mathfrak{B}}$. Then there exists an analytic function $\omega\in\mathcal{H}$ with $\omega(0)=0$ and $|\omega(z)|<1$ for $z\in\mathbb{D}$, such that 
\begin{align}\label{eq-2.21}
	f^{\prime}(z)=\mathfrak{B}(\omega(z))=\sqrt{1+\tanh(\omega(z))},\;\; z\in\mathbb{D}.
\end{align}
Let $p\in\mathcal{P}$. Then, it may be written in terms of the Schwarz function $\omega$ by
\begin{align*}
	p(z)=\frac{1+\omega(z)}{1-\omega(z)}=1+c_1z+c_2z^2+\cdots,\; z\in\mathbb{D}.
\end{align*} 
Therefore, it is clear that
\begin{align}\label{eq-2.22}
	\omega(z)&=\nonumber\frac{p(z)-1}{p(z)+1}\\&\nonumber=\frac{c_1}{2}z+\frac{1}{2}\left(c_2-\frac{c_1^2}{2}\right)z^2+\frac{1}{2}\left(c_3-c_1c_2+\frac{c_1^3}{4}\right)z^3\\&\quad+\frac{1}{2}\left(c_4-c_1c_3+\frac{3c_1^2c_2}{4}-\frac{c_2^2}{2}-\frac{c_1^4}{8}\right)z^4+\cdots
\end{align}
in $\mathbb{D}$. Then $p$ is analytic in $\mathbb{D}$ with $p(0)=1$ and has positive real part in $\mathbb{D}$. In view of \eqref{eq-2.21} together with $\mathcal{B}(\omega(z))$, a tedious computation shows that 
\begin{align}\label{eq-2.23}
	\nonumber\sqrt{1+\tanh(\omega(z))}&=1+\frac{1}{4}c_1z+\left( \frac{1}{4}c_2 -\frac{5}{32}c_1^2\right)z^2+ \left(\frac{1}{4} c_3 -\frac{5}{16}c_1c_2 +\frac{31}{384}c^3_1\right)z^3\\&\quad + \left(\frac{1}{4}c_4 -\frac{1}{4}c_1c_3 -\frac{5}{32}c_2^2 + \frac{7}{32}c^2_1 c_2 -\frac{223}{6144}c_1^4\right)z^4+\cdots
\end{align}
and 
\begin{align}\label{eq-2.24}
	f^{\prime}(z)=1+2a_2z+3a_3 z^2 +4a_4 z^3+ 5a_5 z^4+\cdots.
\end{align}
Thus, using \eqref{eq-2.23} and \eqref{eq-2.24}, we compute from \eqref{eq-2.12} that
\begin{align}\label{eq-2.25}
	\begin{cases}
		a_2=\dfrac{1}{8}c_1,\vspace{2mm}\\
		a_3=\dfrac{1}{12}c_2 - \dfrac{5}{96}c_1^2,\vspace{2mm}\\
		a_4=\dfrac{1}{16}c_3 - \dfrac{5}{64}c_1c_2 + \dfrac{31}{1536}c_1^3. 
	\end{cases}
\end{align}	
A simple computation by using \eqref{eq-2.3} and \eqref{eq-2.25}, shows
that
\begin{align}\label{eq-2.26}
	H_{2,1}(F_f/2)&\nonumber=\frac{1}{48}\left(a_2^4-12a_3^2+12a_2a_4\right)\\&=\frac{1}{589824}\left(-25c_1^4 - 160c_1^2c_2 - 1024c_2^2 + 1152c_1c_3\right).
\end{align}
By Lemma A and \eqref{eq-2.26}, we obtain
\begin{align}\label{eq-2.27}
	H_{2,1}(F_f/2)\nonumber&=\frac{1}{36864} \bigg(-73\tau_1^4 -16\tau_1^2\tau_2\left(1-\tau_1^2\right)-32\tau_2^2(8+\tau_1^2)(1-\tau_1^2)\\&\quad+ 288\tau_1\tau_3 \left(1-\tau_1^2\right) \left(1-|\tau_2|^2\right)\bigg).
\end{align}
	 
 We now explore three possible cases involving $\tau_1$. \vspace{1.2mm}
	 
\noindent{\bf Case-I.} Let $\tau_1=1$. Then, from \eqref{eq-2.27} we see that 
\begin{align*}
	|H_{2,1}(F_f/2)|=\frac{73}{36864}\approx 0.00198025.
\end{align*}
\noindent{\bf Case-II.} Let $\tau_1=0$. Then, from \eqref{eq-2.27} we get 
\begin{align*}  
	|H_{2,1}(F_f/2)|=\bigg|\frac{1}{36864}\left(-256\tau_2^2\right)\bigg|\leq\frac{1}{144}\approx 0.0069444.
\end{align*}
\noindent{\bf Case-III.} Let $\tau_1\in (0, 1)$. Applying triangle inequality in \eqref{eq-2.27} and using the fact that $|\tau_3|\leq 1$, we obtain
\begin{align}\label{eq-2.28}
	|H_{2,1}(F_f/2)|&\nonumber\leq\frac{1}{36864}\bigg(\bigg|-73\tau_1^4 -16\tau_1^2\tau_2\left(1-\tau_1^2\right)-32\tau_2^2(8+\tau_1^2) (1-\tau_1^2)\bigg|\\&\nonumber\quad+288\tau_1(1-\tau_1^2)(1-|\tau_2|^2)\bigg)\\&=\frac{1}{128}\tau_1(1-\tau_1^2)\left(\vline\; A+B\tau_2+C\tau_2^2\;\vline +1-|\tau_2|^2\right)\nonumber\\& :=\frac{1}{128}\tau_1(1-\tau_1^2)Y(A, B, C),
\end{align}
where
\begin{align*}
	A=\frac{73\tau_1^3}{288(1-\tau_1^2)},\;\; B=\frac{\tau_1}{18},\;\;\mbox{and}\;\; C=\frac{(8+\tau_1^2)}{9\tau_1}.
\end{align*}
We note that $AC>0$. Hence, we can apply case (i) of Lemma B and discuss the following cases.\\
			
A simple computation shows that
\begin{align*}
	|B|-2(1-|C|)=\frac{\tau_1}{18}-2\left(1-\frac{8+\tau_1^2}{9\tau_1}\right)=\frac{32-36\tau_1+5\tau_1^2}{18\tau_1}>0
\end{align*}
for all $\tau_1\in (0, 1)$. \textit{i.e.,} $|B|>2(1-|C|)$. Thus from Lemma B, we see that 
\begin{align*}
	Y(A, B, C)=|A|+|B|+|C|=\frac{256- 208\tau_1^2 + 25\tau_1^4} {288\tau_1\left(1-\tau_1^2\right)}.
\end{align*}
In view of the inequality \eqref{eq-2.28} it follows that
\begin{align}\label{eq-2.29}
	\nonumber|H_{2,1}(F_f/2)|&=\frac{1}{128}\tau_1(1-\tau_1^2)\left(|A|+|B|+|C|\right)\\&\nonumber=\frac{1}{36864}\left(256- 208\tau_1^2 + 25\tau_1^4\right)\\&=\frac{1}{36864}\Psi(\tau_1),
\end{align}
\begin{figure}[!htb]
	\begin{center}
		\includegraphics[width=0.55\linewidth]{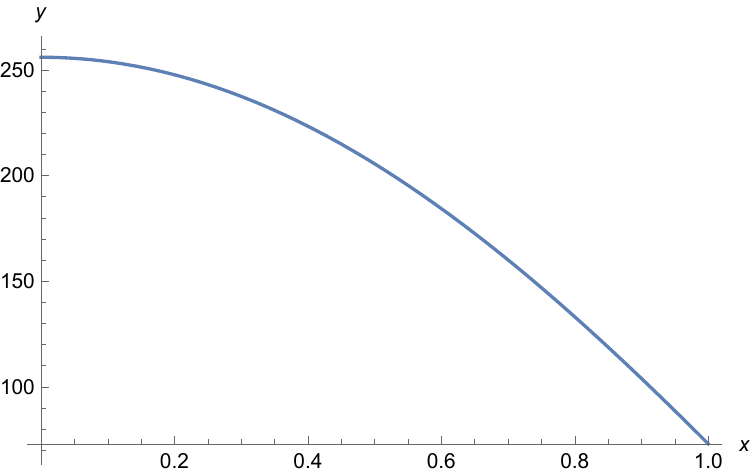}
	\end{center}
	\caption{The image $\Psi(t)$ for $0\leq t\leq 1$.}
\end{figure}
where $\Psi(t)=256-208t^2+25t^4$ for $t\in [0, 1]$. A simple computation shows that $\Psi^{\prime}(t)=-416t + 100t^3<0$ for all $t\in (0,1]$ which shows that $\Psi$ is a decreasing function on $[0, 1]$. Hence, the maximum of $\Psi(t)$ is attained at $t=0$, and the maximum value is $256$. Hence, from \eqref{eq-2.29}, we see that
\begin{align}\label{eq-2.30}
	|H_{2,1}(F_{f/2})|\leq \frac{1}{144}.
\end{align}
By summarizing Cases I, II, and III, we obtain the desired inequality of the result. The function $f_2$, which is defined in \eqref{eq-2.9} gives the sharpness of the inequality \eqref{eq-2.30}. This completes the proof.
\end{proof}

\section{\bf Sharp bound of logarithmic coefficients for inverse functions in the class $\mathcal{BT_\mathfrak{B}}$}
Let $F$ be the inverse function of $f\in\mathcal{S}$ defined in a neighborhood of the origin with the Taylor series expansion
\begin{align}\label{eq-3.1}
	F(w):=f^{-1}(w)= w+\sum_{n=2}^{\infty} A_n w^n,
\end{align}
where we may choose $|w|<1/4$, as we know that the famous Koebe’s $1/4$-theorem ensures that, for each univalent function $f$ defined in $\mathbb{D}$, it inverse $f^{-1}$ exists at least on a disc of radius $1/4$. Using a variational method, L$\ddot{\mbox{o}}$wner \cite{Lowner-IMA-1923} has obtained the sharp estimate $ |A_n|\leq K_n \;\mbox{for each}\; n, $ where $K_n=(2n)!/(n!(n+1)!)$ and $K(w)= w +K_2 w^2 +K_3 w^3 +\cdots$ is the inverse of the K\"oebe function. There has been a good deal of interest in determining the behavior of the inverse coefficients of $f$ given in \eqref{eq-1.1} when the corresponding function $f$ is restricted to some proper geometric subclasses of $\mathcal{S}$.\vspace{2mm}

Let $f(z)=z+ \sum_{n=2}^{\infty}a_nz^n$ be a function in class $\mathcal{S}$. Since $f(f^{-1}(w))=w$ and using \eqref{eq-3.1} we obtain
\begin{align}\label{eq-3.2}
	\begin{cases}
		A_2= -a_2, \\ A_3=-a_3 +2a^2_{2}, \\ A_4=- a_4 +5a_2 a_3 -5a^3_{2}, \\ A_5=- a_5+6a_4 a_2- 21a_3 a^2_{2} +3a^2_{3} +14a^4_{2}.
	\end{cases}
\end{align}
The notation of the logarithmic coefficient of inverse of $f$ has been studied by Ponnusamy \textit{et al.} \cite{Ponnusamy-Sharma-Wirths-RM-2018}. As with $f$, the logarithmic inverse coefficients $\Gamma_n:=\Gamma_n(F)$, $n\in\mathbb{N}$, of $F$ are defined by the equation
\begin{align}\label{eq-3.3}
	F_{f^{-1}}(w):=\log\left(\frac{f^{-1}(w)}{w}\right)=2\sum_{n=1}^{\infty} \Gamma_n(F) w^n \;\;\;\; \mbox{for} \;\;|w|<1/4.
\end{align}
In $2018$, Ponnusamy \textit{et al.} \cite{Ponnusamy-Sharma-Wirths-RM-2018} obtained the sharp bound for the logarithmic inverse coefficients for functions in the class $\mathcal{S}$. In fact, Ponnusamy \emph{et al.}  \cite{Ponnusamy-Sharma-Wirths-RM-2018} established that for $f\in\mathcal{S}$ 
\begin{align*}
	|\Gamma_n(F)|\leq\frac{1}{2n}\binom{2n}{n}
\end{align*}
and showed that the equality holds only for the Koebe function or  its rotations. By differentiating \eqref{eq-3.3} together with \eqref{eq-3.1}, using \eqref{eq-3.2} and then equating coefficients, we obtain
\begin{align}\label{eq-3.4}
	\begin{cases}
		\Gamma_1=-\dfrac{1}{2}a_2, \vspace{1.5mm}\\ \Gamma_2=-\dfrac{1}{2}\left(a_3 -\dfrac{3}{2}a^2_{2}\right), \vspace{1.5mm} \\ \Gamma_3=-\dfrac{1}{2}\left(a_4 -4a_2 a_3 +\dfrac{10}{3}a^3_{2}\right).
	\end{cases}
\end{align}
In \cite{Kowalczyk-Lecko-BAMS-2022}, Kowalczyk and Lecko  proposed a Hankel determinant whose elements are the logarithmic coefficients of $f\in\mathcal{S}$, realizing the extensive use of these coefficients. Inspired by these ideas, S\"ummer \textit{et al.} in \cite{Sumer-Lecko-BMMSS-2023} started the investigation of the Hankel determinants $H_{q,n}(F_{f^{-1}}/2)$, wherein the entries of Hankel matrices are logarithmic coefficient of $f^{-1}$ with $f\in\mathcal{S}$. It follows that
\begin{align}\label{eq-3.5}
	H_{2,1}(F_{f^{-1}}/2):=\Gamma_{1}\Gamma_{3} -\Gamma^2_{2}=\frac{1}{48} \left(13a^4_2 -12a^2_2 a_3 - 12 a^2_3 + 12 a_2 a_4\right).
\end{align}
It is now appropriate to remark that $H_{2,1}(F_{f^{-1}}/2)$ is invariant under rotation, since for $f_{\theta}(z):=e^{-i\theta} f(e^{i\theta}z)$, $\theta\in\mathbb{R}$ when $f\in\mathcal{S}$, we have
\begin{align*}
	H_{2,1}(F_{f^{-1}_{\theta}}/2)=\frac{e^{4i\theta}}{48}\left(13a^4_2 -12a^2_2 a_3 - 12 a^2_3 + 12 a_2 a_4\right)=e^{4i\theta}H_{2,1} (F_{f^{-1}_{\theta}}/2).
\end{align*}
\subsection{Logarithmic inverse coefficients for the Class $\mathcal{BT_\mathfrak{B}}$:}We obtain the following result where, we obtain the sharp bound of the logarithmic coefficients of inverse functions in the class $ \mathcal{BT_\mathfrak{B}}$.
\begin{theorem}
Let $f(z)=z+a_2z^2+a_3z^3+\cdots\in\mathcal{BT_\mathfrak{B}}$ and $ \Gamma_{1}, \Gamma_{2} $ are given by \eqref{eq-3.4}. Then we have 
\begin{align*}
	|\Gamma_1|\leq \frac{1}{8}\;\;\;\;\mbox{and}\;\;\;\;|\Gamma_2|\leq \frac{1}{12}.
\end{align*}
The inequalities are sharp for the functions $f_1$ and $ f_2 $, defined in \eqref{eq-2.8} and \eqref{eq-2.9}, respectively.
\end{theorem}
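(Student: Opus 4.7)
The plan is to reuse the coefficient representation established in the proof of Theorem 3.1. Since every $f \in \mathcal{BT_\mathfrak{B}}$ satisfies $f^{\prime}(z) = \sqrt{1+\tanh \omega(z)}$ for some Schwarz function $\omega$, writing $\omega = (p-1)/(p+1)$ with $p \in \mathcal{P}$ gives, via \eqref{eq-2.25}, the identities
\begin{align*}
a_2 = \tfrac{1}{8}c_1, \qquad a_3 = \tfrac{1}{12}c_2 - \tfrac{5}{96}c_1^2.
\end{align*}
I would substitute these directly into the expressions \eqref{eq-3.4} for $\Gamma_1$ and $\Gamma_2$. For the first coefficient, $\Gamma_1 = -\tfrac{1}{2}a_2 = -\tfrac{1}{16}c_1$, so the elementary bound $|c_1| \leq 2$ for Carath\'eodory functions immediately yields $|\Gamma_1| \leq \tfrac{1}{8}$.

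For $\Gamma_2$, I would compute
\begin{align*}
\Gamma_2 = -\tfrac{1}{2}\!\left(a_3 - \tfrac{3}{2}a_2^2\right) = -\tfrac{1}{2}\!\left(\tfrac{1}{12}c_2 - \tfrac{5}{96}c_1^2 - \tfrac{3}{128}c_1^2\right) = -\tfrac{1}{24}\!\left(c_2 - \tfrac{29}{32}c_1^2\right),
\end{align*}
after combining $\tfrac{5}{96} + \tfrac{3}{128} = \tfrac{29}{384}$. The key step is then to apply the Ma--Minda functional bound (Lemma C) with parameter $v = \tfrac{29}{32}$; since $0 \leq \tfrac{29}{32} \leq 1$, Lemma C gives $|c_2 - v c_1^2| \leq 2$, and hence $|\Gamma_2| \leq \tfrac{1}{12}$.

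For sharpness, I would check the extremal functions explicitly. Taking $\omega(z) = z$ forces $c_1 = 2$, which saturates the bound on $\Gamma_1$ and corresponds to $f_1$ defined in \eqref{eq-2.8}. For $\Gamma_2$, the middle case $0 < v < 1$ of Lemma C is extremal for $p(z) = (1+z^2)/(1-z^2)$, i.e.\ when $\omega(z) = z^2$ (so that $c_1 = 0$ and $|c_2| = 2$); this precisely yields the function $f_2$ in \eqref{eq-2.9}, confirming sharpness.

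There is essentially no obstacle here: the computations are routine once the coefficient formulas from Theorem 3.1 are in hand, and the only real choice is recognizing that the Fekete--Szeg\"o type combination in $\Gamma_2$ falls in the ``flat'' regime $0 \leq v \leq 1$ of Lemma C. The main care needed is arithmetic accuracy in simplifying $-\tfrac{5}{96} - \tfrac{3}{128}$ to $-\tfrac{29}{384}$ and reading off the correct $v$.
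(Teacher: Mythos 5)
Your proposal is correct and follows essentially the same route as the paper: substitute the coefficient formulas $a_2=\tfrac18 c_1$, $a_3=\tfrac1{12}c_2-\tfrac5{96}c_1^2$ into \eqref{eq-3.4}, bound $|\Gamma_1|$ by $|c_1|\leq 2$, and bound $|\Gamma_2|=\tfrac1{24}\bigl|c_2-\tfrac{29}{32}c_1^2\bigr|$ via Lemma C with $v=\tfrac{29}{32}\in[0,1]$, with sharpness from $f_1$ and $f_2$. Your explicit identification of the extremal Schwarz functions $\omega(z)=z$ and $\omega(z)=z^2$ is a minor elaboration of the sharpness claim the paper simply asserts.
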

\begin{proof}
\noindent{\bf Sharp bounds of $\Gamma_{1}$:} From \eqref{eq-2.16} and \eqref{eq-3.4}, we have
\begin{align}\label{eq-3.6}
	|\Gamma_1|=\;\vline-\frac{1}{2}a_2\;\vline=\vline-\frac{1}{16}c_1\vline \leq\frac{1}{8}.
\end{align}
The function $f_1$, which is defined in \eqref{eq-2.8} gives the sharpness of the inequality \eqref{eq-3.6}. \vspace{2mm}
	
\noindent{\bf Sharp bounds of $\Gamma_{2}$:} By using \eqref{eq-2.16} and \eqref{eq-3.4}, we have
\begin{align*}
	|\Gamma_2|&=\;\vline-\dfrac{1}{2}\left(a_3 -\dfrac{3}{2}a^2_{2} \right)\vline\\&=\; \vline-\dfrac{1}{2}\left(\dfrac{1}{12}c_2 - \dfrac{5}{96}c_1^2 -\dfrac{3}{128}c^2_{1} \right)\vline \\&=\frac{1}{24}\;\vline\; c_2 -\frac{29}{32}c^2_{1} \;\vline.
\end{align*}
Therefore, by Lemma C, we obtain
\begin{align}\label{eq-3.7}
	|\Gamma_2|\leq \frac{1}{12}.
\end{align}
The function $f_2$, which is defined in \eqref{eq-2.9} gives the sharpness of the inequality \eqref{eq-3.7}. This completes the proof.
\end{proof}
\subsection{\textbf{Sharp bound of $H_{2,1}(F_{f^{-1}}/2)$ for the Class $\mathcal{BT_\mathfrak{B}}$}:} We obtain a result finding the sharp bound of the second Hankel determinant $ H_{2,1}(F_{f^{-1}}/2) $ with logarithmic coefficients for functions in the class $ \mathcal{BT_\mathfrak{B}}$.
\begin{theorem}
Let $f\in\mathcal{BT_\mathfrak{B}}$ and has the series representation $f(z)=z+a_2z^2+a_3z^3+\cdots$, and $\Gamma_{1}, \Gamma_{2}$, $ \Gamma_{3}$ are given by \eqref{eq-3.4}. Then we have 
\begin{align*}
	|H_{2,1}(F_{f^{-1}}/2)|:=|\gamma_1\gamma_3-\gamma_2^2|\leq \frac{1}{144}.
\end{align*}
The inequality is sharp for the function $f_2$, defined in \eqref{eq-2.9}.
\end{theorem}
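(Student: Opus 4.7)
The plan is to mirror the proof of Theorem \ref{Th-2.1} almost verbatim, exploiting the parallel structure. First I would set up the Schwarz function $\omega$ with $f'(z)=\mathfrak{B}(\omega(z))$, introduce the associated Carath\'eodory function $p(z)=(1+\omega(z))/(1-\omega(z))$, and reuse the coefficient relations \eqref{eq-2.25} giving $a_2,a_3,a_4$ in terms of $c_1,c_2,c_3$. Substituting these into
\begin{align*}
H_{2,1}(F_{f^{-1}}/2)=\frac{1}{48}\bigl(13a_2^4-12a_2^2a_3-12a_3^2+12a_2a_4\bigr)
\end{align*}
will produce, after collecting like terms, an expression of the form
\begin{align*}
H_{2,1}(F_{f^{-1}}/2)=\frac{1}{N}\bigl(\alpha\, c_1^4+\beta\, c_1^2 c_2+\gamma\, c_2^2+\delta\, c_1 c_3\bigr)
\end{align*}
for explicit constants $\alpha,\beta,\gamma,\delta$ and integer $N$. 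This is the analogue of \eqref{eq-2.26}, the only difference being the contribution of the extra terms $13a_2^4-12a_2^2a_3$, which modify the $c_1^4$ and $c_1^2c_2$ coefficients but do not touch the $c_2^2$ or $c_1c_3$ coefficients.

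Next I would apply Lemma A to substitute \eqref{eq-2.5}--\eqref{eq-2.7}, obtaining $H_{2,1}(F_{f^{-1}}/2)$ as a polynomial in $\tau_1\in[0,1]$ and $\tau_2,\tau_3\in\overline{\mathbb{D}}$. Then split into three cases following the template of Theorem \ref{Th-2.1}: the boundary cases $\tau_1=1$ and $\tau_1=0$ yield numerical bounds directly (the $\tau_1=0$ case should already give $\tfrac{1}{144}$ via $|\tau_2|\le 1$, matching the known sharp function $f_2$ with $c_1=0$), while the interior case $\tau_1\in(0,1)$ is handled by the triangle inequality and $|\tau_3|\le 1$ to reach an estimate of the shape
\begin{align*}
|H_{2,1}(F_{f^{-1}}/2)|\le \frac{1}{M}\tau_1(1-\tau_1^2)\,Y(A,B,C),
\end{align*}
where $A,B,C$ are explicit rational functions of $\tau_1$, and Lemma B is invoked.

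The step requiring the most care is verifying the sign condition for Lemma B (expected to be $AC\ge 0$, placing us in branch (i)) together with the sub-condition $|B|\gtrless 2(1-|C|)$ for all $\tau_1\in(0,1)$, and then showing that the resulting single-variable polynomial $\Psi(\tau_1)$ is decreasing on $[0,1]$ so that its maximum at $\tau_1=0$ gives exactly $1/144$. Given the strong parallel with Theorem \ref{Th-2.1} (the extra $13a_2^4-12a_2^2a_3$ terms only perturb the $\tau_1$-polynomial without disturbing its monotonicity), I expect the same pattern: a polynomial of the form $256-\lambda\tau_1^2+\mu\tau_1^4$ whose derivative is negative on $(0,1]$. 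The main obstacle is therefore the bookkeeping for the new coefficients $\alpha,\beta$ and confirming that the monotonicity of $\Psi$ survives. Sharpness is immediate since $f_2$ in \eqref{eq-2.9} has $a_2=a_4=0$ and $a_3=1/6$, giving $|H_{2,1}(F_{f^{-1}}/2)|=|{-12(1/6)^2}|/48=1/144$.
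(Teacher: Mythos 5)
Your plan coincides with the paper's own proof: the same Schwarz-function setup and coefficient relations, substitution into $\tfrac{1}{48}\bigl(13a_2^4-12a_2^2a_3-12a_3^2+12a_2a_4\bigr)$, Lemma A parametrization, the three cases in $\tau_1$, Lemma B case (i) with $|B|>2(1-|C|)$, and a decreasing polynomial (the paper's $\Psi_1(t)=256-112t^2-131t^4$) maximized at $t=0$ to give $\tfrac{1}{144}$, with sharpness from $f_2$. The only work you have deferred is the routine bookkeeping of the explicit constants, and the paper confirms that all the conditions you anticipated do hold.
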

\begin{proof}
Let $f\in\mathcal{BT_\mathfrak{B}}$. Then there exists an analytic function $\omega\in\mathcal{H}$ with $\omega(0)=0$ and $|\omega(z)|<1$ for $z\in\mathbb{D}$, such that 
\begin{align}\label{eq-3.8}
	f^{\prime}(z)=\mathfrak{B}(\omega(z))=\sqrt{1+\tanh(\omega(z))},\;\; z\in\mathbb{D}.
\end{align}
Let $p\in\mathcal{P}$. Then, it may be written in terms of the Schwarz function $\omega$ by
\begin{align*}
	p(z)=\frac{1+\omega(z)}{1-\omega(z)}=1+c_1z+c_2z^2+\cdots,\; z\in\mathbb{D}.
\end{align*} 
Hence, it is evident that
\begin{align}\label{eq-3.9}
	\omega(z)&=\nonumber\frac{p(z)-1}{p(z)+1}\\&\nonumber=\frac{c_1}{2}z+\frac{1}{2}\left(c_2-\frac{c_1^2}{2}\right)z^2+\frac{1}{2}\left(c_3-c_1c_2+\frac{c_1^3}{4}\right)z^3\\&\quad+\frac{1}{2}\left(c_4-c_1c_3+\frac{3c_1^2c_2}{4}-\frac{c_2^2}{2}-\frac{c_1^4}{8}\right)z^4+\cdots
\end{align}
in $\mathbb{D}$. Then $p$ is analytic in $\mathbb{D}$ with $p(0)=1$ and has positive real part in $\mathbb{D}$. In view of \eqref{eq-3.8} together with $\mathcal{B}(\omega(z))$, a tedious computation shows that 
\begin{align}\label{eq-3.10}
	\nonumber\sqrt{1+\tanh(\omega(z))}&=1+\frac{1}{4}c_1z+\left( \frac{1}{4}c_2 -\frac{5}{32}c_1^2\right)z^2+ \left(\frac{1}{4} c_3 -\frac{5}{16}c_1c_2 +\frac{31}{384}c^3_1\right)z^3\\&\quad + \left(\frac{1}{4}c_4 -\frac{1}{4}c_1c_3 -\frac{5}{32}c_2^2 + \frac{7}{32}c^2_1 c_2 -\frac{223}{6144}c_1^4\right)z^4+\cdots
\end{align}
and 
\begin{align}\label{eq-3.11}
	f^{\prime}(z)=1+2a_2z+3a_3 z^2 +4a_4 z^3+ 5a_5 z^4+\cdots.
\end{align}
Thus, using \eqref{eq-3.10} and \eqref{eq-3.11}, we compute from \eqref{eq-3.8} that
\begin{align}\label{eq-3.12}
	\begin{cases}
		a_2=\dfrac{1}{8}c_1,\vspace{2mm}\\
		a_3=\dfrac{1}{12}c_2 - \dfrac{5}{96}c_1^2,\vspace{2mm}\\
		a_4=\dfrac{1}{16}c_3 - \dfrac{5}{64}c_1c_2 + \dfrac{31}{1536}c_1^3. 
	\end{cases}
\end{align}	
A simple computation by using \eqref{eq-3.5} and \eqref{eq-3.12}, shows
that
\begin{align}\label{eq-3.13}
	H_{2,1}(F_{f^{-1}}/2)&\nonumber=\frac{1}{48} \left(13a^4_2 -12a^2_2 a_3 - 12 a^2_3 + 12 a_2 a_4\right)\\&=\frac{1}{589824} \left(131c_1^4 - 352c_1^2c_2 - 1024c_2^2 + 1152c_1c_3\right).
\end{align}
By Lemma A and \eqref{eq-3.13}, we obtain
\begin{align}\label{eq-3.14}
	H_{2,1}(F_{f^{-1}}/2)\nonumber&=\frac{1}{36864} \bigg(-13\tau_1^4 -112\tau_1^2\tau_2\left(1-\tau_1^2\right)-32\tau_2^2(8+\tau_1^2)(1-\tau_1^2)\\&\quad+ 288\tau_1\tau_3 \left(1-\tau_1^2\right) \left(1-|\tau_2|^2\right)\bigg).
\end{align}
	
We now explore three possible cases involving $\tau_1$. \vspace{2mm}
	
\noindent{\bf Case-I.} Let $\tau_1=1$. Then, from \eqref{eq-3.14} we see that 
\begin{align*}
	|H_{2,1}(F_{f^{-1}}/2)|=\frac{13}{36864}\approx 0.000352.
\end{align*}
\noindent{\bf Case-II.} Let $\tau_1=0$. Then, from \eqref{eq-3.14} we get 
\begin{align*}  
	|H_{2,1}(F_{f^{-1}}/2)|=\bigg|\frac{1}{36864}\left(-256\tau_2^2\right)\bigg|\leq\frac{1}{144}\approx 0.0069444.
\end{align*}
\noindent{\bf Case-III.} Let $\tau_1\in (0, 1)$. Applying triangle inequality in \eqref{eq-3.14} and using the fact that $|\tau_3|\leq 1$, we obtain
\begin{align}\label{eq-3.15}
	|H_{2,1}(F_{f^{-1}}/2)|&\nonumber\leq\frac{1}{36864}\bigg(\bigg|-13\tau_1^4 -112\tau_1^2\tau_2\left(1-\tau_1^2\right)-32\tau_2^2(8 +\tau_1^2) (1-\tau_1^2)\bigg|\\&\nonumber\quad+288\tau_1(1- \tau_1^2)(1-|\tau_2|^2)\bigg)\\&=\frac{1}{128}\tau_1(1-\tau_1^2)\left(\vline\; A+B\tau_2+C\tau_2^2\;\vline +1-|\tau_2|^2\right)\nonumber\\& :=\frac{1}{128}\tau_1(1-\tau_1^2)Y(A, B, C),
\end{align}
where
\begin{align*}
	A=\frac{13\tau_1^3}{288(1-\tau_1^2)},\;\; B=\frac{7\tau_1}{18},\;\;\mbox{and}\;\; C=\frac{(8+\tau_1^2)}{9\tau_1}.
\end{align*}
We note that $AC>0$. Hence, we can apply case (i) of Lemma B and discuss the following cases.\\
	
A simple computation shows that
\begin{align*}
	|B|-2(1-|C|)=\frac{7\tau_1}{18}-2\left(1-\frac{8+\tau_1^2}{9\tau_1}\right)=\frac{32-36\tau_1+11\tau_1^2}{18\tau_1}>0
\end{align*}
for all $\tau_1\in (0, 1)$. \textit{i.e.,} $|B|>2(1-|C|)$. Thus from Lemma B, we see that 
\begin{align*}
	Y(A, B, C)=|A|+|B|+|C|=\frac{256- 112\tau_1^2 -131\tau_1^4} {288\tau_1\left(1-\tau_1^2\right)}.
\end{align*}
In view of the inequality \eqref{eq-3.15} it follows that
\begin{align}\label{eq-3.16}
	\nonumber|H_{2,1}(F_{f^{-1}}/2)|&=\frac{1}{128}\tau_1(1-\tau_1^2)\left(|A|+|B|+|C|\right)\\&\nonumber=\frac{1}{36864}\left(256- 112\tau_1^2 -131\tau_1^4\right)\\&=\frac{1}{36864}\Psi_1(\tau_1),
\end{align}
\begin{figure}[!htb]
	\begin{center}
		\includegraphics[width=0.55\linewidth]{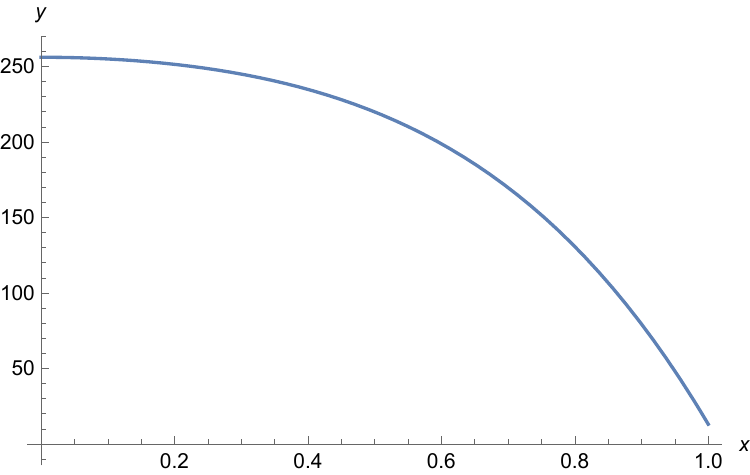}
	\end{center}
	\caption{The image $\Psi_1(t)$ for $0\leq t\leq 1$.}
\end{figure}
where $\Psi_1(t)=256-112t^2-131t^4$ for $t\in [0, 1]$. A simple computation shows that $\Psi^{\prime}_1(t)=-224t -524t^3<0$ for all $t\in (0,1]$ which shows that $\Psi_1$ is a decreasing function on $[0, 1]$. Hence, the maximum of $\Psi_1(t)$ is attained at $t=0$, and the maximum value is $256$. Hence, from \eqref{eq-3.16}, we see that
\begin{align}\label{eq-3.17}
	|H_{2,1}(F_{f^{-1}}/2)|\leq \frac{1}{144}.
\end{align}
By summarizing Cases I, II, and III, we obtain the desired inequality of the result. The function $f_2$, which is defined in \eqref{eq-2.9} gives the sharpness of the inequality \eqref{eq-3.17}. This completes the proof.
\end{proof}
\section{\bf Generalized Zalcman conjecture for the Class $\mathcal{BT_\mathfrak{B}}$:} In $1999$, Ma (see \cite{Ma-JMAA-1999}) generalized the  Zalcman conjecture as: for $f\in\mathcal{S}$, $|a_{n} a_{m} -a_{m+n-1}|\leq (n-1)(m-1)$; $n,m\in \mathbb{N}\setminus\{1\}$. In recent years there has been a great deal of attention devoted to finding sharp bounds of the Zalcman functional $J_{2,3}:=a_2 a_3 -a_4$ for several class of functions (see \cite{Lecko-Sim-RM-2019, Ravichandran- Verma-JMAA-2017, Krushkal-GMJ-2010}). Now we compute the sharp bounds of the generalized Zalcman functional $J_{2,3}:=a_2 a_3 -a_4$ for the  class $\mathcal{BT_\mathfrak{B}}$ being a special case of the generalized Zalcman functional $J_{n,m}:=a_n a_m -a_{n+m-1}$, $n,m\in\mathbb{N}\setminus\{1\}$, which was investigated by Ma in \cite{Ma-JMAA-1999} for $f\in\mathcal{S}$. We have derived the following result concerning the sharp bound for the Zalcman function $J_{2,3}$ in the class $\mathcal{BT_\mathfrak{B}}$.
\begin{theorem}
Let $f(z)=z+a_2z^2+a_3z^3+\cdots\in\mathcal{BT_\mathfrak{B}}$. Then we have
\begin{align}\label{Eq-4.1}
	|a_2 a_3 -a_4|\leq \frac{1}{8}.
\end{align}
The inequality \eqref{Eq-4.1} is sharp for the function $f_3$, which is defined in \eqref{eq-2.10}.
\end{theorem}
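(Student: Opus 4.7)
The plan is to exploit the same Schwarz--Carath\'eodory parametrisation already used in Sections~3 and~4. Since $f\in\mathcal{BT_\mathfrak{B}}$, there exists $\omega$ analytic in $\mathbb{D}$ with $\omega(0)=0$ and $|\omega|<1$ such that $f^{\prime}(z)=\sqrt{1+\tanh(\omega(z))}$; introducing $p(z)=(1+\omega(z))/(1-\omega(z))=1+\sum c_nz^n\in\mathcal{P}$ gives exactly the coefficient formulas
$$a_2=\tfrac{1}{8}c_1,\qquad a_3=\tfrac{1}{12}c_2-\tfrac{5}{96}c_1^2,\qquad a_4=\tfrac{1}{16}c_3-\tfrac{5}{64}c_1c_2+\tfrac{31}{1536}c_1^3$$
recorded in \eqref{eq-2.16}. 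I would simply substitute these into the Zalcman functional $a_2a_3-a_4$ and collect like terms.

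The key observation, which I expect after a short bookkeeping, is that the result factors cleanly as
$$a_2a_3-a_4 \;=\; -\frac{1}{16}\left(c_3 - \frac{17}{12}\,c_1c_2 + \frac{41}{96}\,c_1^3\right),$$
which is precisely the Libera form $c_3-2Bc_1c_2+Dc_1^3$ appearing in Lemma~D, with $B=\tfrac{17}{24}$ and $D=\tfrac{41}{96}$. The next step is to verify the hypotheses of Lemma~D: one has $0\le B=\tfrac{17}{24}\le 1$, and a direct check gives $B(2B-1)=\tfrac{17}{24}\cdot\tfrac{5}{12}=\tfrac{85}{288}$, while $D=\tfrac{123}{288}$ and $B=\tfrac{204}{288}$, so $B(2B-1)\le D\le B$ holds. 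Lemma~D then supplies $|c_3-\tfrac{17}{12}c_1c_2+\tfrac{41}{96}c_1^3|\le 2$, and multiplying by $\tfrac{1}{16}$ yields $|a_2a_3-a_4|\le \tfrac{1}{8}$.

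For sharpness, the Libera extremal is attained when $c_1=c_2=0$ and $|c_3|=2$, corresponding to $p(z)=(1+z^3)/(1-z^3)$, i.e.\ $\omega(z)=z^3$. Feeding this back through $f^{\prime}(z)=\sqrt{1+\tanh(z^3)}$ and integrating produces exactly the function $f_3$ from \eqref{eq-2.10}, for which $a_2=a_3=0$ and $a_4=-\tfrac{1}{8}$, confirming equality.

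The main obstacle is really just the algebraic one: the bound stands or falls on whether the pair $(B,D)$ produced by the coefficient formulas happens to lie in the admissible Libera region $\{0\le B\le 1,\;B(2B-1)\le D\le B\}$. If even one of those inequalities failed, one would be forced into the full three--parameter Schur analysis via Lemma~A (in the spirit of the case split used for Theorem~\ref{Th-2.1}), with attendant case work and an auxiliary extremal analysis of $Y(A,B,C)$. Here the arithmetic for the domain $\mathfrak{B}$ is favourable enough that Lemma~D applies directly, and the bound is obtained essentially for free once the factorisation is in hand.
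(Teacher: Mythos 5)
Your proposal is correct and follows essentially the same route as the paper: substituting the coefficient formulas from \eqref{eq-2.16}, recognising the Libera form $c_3-2Bc_1c_2+Dc_1^3$ with $B=\tfrac{17}{24}$, $D=\tfrac{41}{96}$, verifying $0\le B\le 1$ and $B(2B-1)\le D\le B$, and invoking Lemma D, with sharpness given by $f_3$. The only slip is cosmetic: for $\omega(z)=z^3$ one has $c_3=2$ and hence $a_4=+\tfrac{1}{8}$ rather than $-\tfrac{1}{8}$, which does not affect the equality $|a_2a_3-a_4|=\tfrac{1}{8}$.
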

\begin{proof}
In view of \eqref{eq-3.12}, we obtain
\begin{align*}
	|a_2 a_3 -a_4|&=\;\vline\; \dfrac{1}{8}c_1 \left(\dfrac{1}{12}c_2 - \dfrac{5}{96}c_1^2\right) - \left(\dfrac{1}{16}c_3 - \dfrac{5}{64}c_1c_2 + \dfrac{31}{1536}c_1^3\right)\;\vline \\&=\;\vline -\frac{41}{1536} c^3_1 +\frac{17}{192}c_1 c_2 -\frac{1}{16}c_3\;\vline \\&=\frac{1}{1536}\;\vline -41 c^3_1 +136 c_1 c_2 -96c_3\;\vline\\&=\frac{1}{16}\;\vline\; c_3 -\frac{17}{12} c_1 c_2 +\frac{41}{96} c^3_1\;\vline\\& =\frac{1}{16}\;\vline\; c_3 -2Bc_1 c_2 +Dc^3_1\;\vline,
\end{align*}
where
\begin{align*}
	B=\frac{17}{24} \;\;\;\;\;\mbox{and}\;\;\;\; D=\frac{41}{96}.
\end{align*}
It is easy to see that $0\leq B\leq 1$ and the inequality $B(2B-1)\leq D\leq B$ implies that $\frac{85}{288}< \frac{41}{96}<\frac{17}{24}$, is true. Therefore, using Lemma D, we obtain
\begin{align}\label{eq-4.1}
	|a_2 a_3 -a_4|\leq \frac{1}{8}.
\end{align}
The function $f_3$, defined in \eqref{eq-2.10} gives the sharpness of the inequality \eqref{eq-4.1}. This completes the proof.
\end{proof}

\section{\bf Moduli differences of logarithmic coefficients for $\mathcal{BT_\mathfrak{B}}$} In $ 1985 $, de Branges \cite{Branges-AM-1985} solved the famous Bieberbach conjecture by showing that for any function $ f\in\mathcal{S} $ of the form \eqref{eq-1.1}, the inequality $ |a_n|\leq n $ holds for all $ n\geq 2 $, with equality attained by the Koebe function $ k(z):=z/(1-z)^2 $ or its rotations. This naturally led to the question of whether the inequality  $ ||a_{n+1}|-|a_n||\leq 1 $ holds for $ f\in\mathcal{S} $ when $ n\geq 2 $. This problem was first studied by Goluzin in \cite{Goluzin-1946} initially investigated this problem in an attempt to solve the Bieberbach conjecture. Later, in $ 1963 $, Hayman \cite{Hayman-1963} established that for all $ f\in\mathcal{S} $, there exists an absolute constant $ A\geq 1 $ such that $ ||a_{n+1}|-|a_n||\leq A $. The current best known estimate as of now is $ A=3.61 $, due to Grinspan \cite{Grinspan-1976}. On the other hand, for the class $ \mathcal{S} $, the sharp bound is known only for $ n=2 $ (see \cite[Theorem 3.11]{Duren-1983-NY}), namely
\begin{align*}
	-1\leq |a_3|-|a_2|\leq 1.029...
\end{align*}
Similarly, for functions $ f\in\mathcal{S}^* $, Pommerenke \cite{Ch. Pommerenke-1971} has conjectured that $ ||a_{n+1}|-|a_n||\leq 1 $ which was subsequently proven by Leung \cite{Leung-BLMS-1978} in $ 1978 $. For convex functions, Li and Sugawa \cite{Li-Sugawa-CMFT-2017} investigated the sharp bound of $ |a_{n+1}|-|a_n| $ for $ n\geq 2 $, and establish the sharp bounds for $ n=2, 3 $. \vspace{2mm}

The inverse functions are studied by several authors in different perspective (see, for instance, \cite{Sim-Thomas-BAMS-2022, Sim-Thomas-S-2020}). Recently, Sim and Thomas  \cite{Sim-Thomas-BAMS-2022} obtained sharp upper and lower bounds on the difference of the moduli of successive inverse coefficients
for the subclasses of univalent functions. Inspired by the prior research, including the recent article \cite{Allu-Shaji-JMAA-2025}, this paper focuses on determining sharp lower and upper bounds of $ |\gamma_2|-|\gamma_1| $ and $ |\Gamma_2|-|\Gamma_1| $ for functions in the class $ \mathcal{BT_\mathfrak{B}} $. Our approach involves proving Theorem \ref{Th-7.1} and Theorem \ref{Th-7.2} with the aid of Lemma F, which plays a crucial role. We state Lemma F as follows.

\begin{LemF}\cite{Sim-Thomas-S-2020}	
Let $B_1$, $B_2$ and $B_3$ be numbers such that $B_1>0$, $B_2\in\mathbb{C}$ and $B_3\in\mathbb{R}$. Let $p\in\mathcal{P}$ of the form \eqref{eq-2.4}. Define $\Psi_{+}(c_1,c_2)$ and $\Psi_{-}(c_1,c_2)$ by
\begin{align*}
	\Psi_{+}(c_1,c_2)=|B_2 c^2_1 +B_3 c_2| -|B_1 c_1|,
\end{align*}
and
\begin{align*}
	\Psi_{-}(c_1,c_2)=-\Psi_{+}(c_1,c_2).
\end{align*}
Then
\begin{align}\label{eq-5.1}
	\Psi_{+}(c_1,c_2)\leq
	\begin{cases}
		|4B_2 +2B_3|-2B_1, \;\;\;\;\mbox{if}\;\;|2B_2 +B_3|\geq |B_3|+ B_1,\vspace{2mm} \\ 2|B_3|,\hspace{2.8cm}\;\mbox{otherwise},
	\end{cases}
\end{align}
and
\begin{align}\label{eq-5.2}
	\Psi_{-}(c_1,c_2)\leq
	\begin{cases}
		2B_1 -B_4, \hspace{2.8cm}\mbox{if}\;\; B_1\geq B_4 +2|B_3|, \vspace{2mm} \\ 2B_1 \sqrt{\dfrac{2|B_3|}{B_4+2|B_3|}}, \hspace{1.3cm}\;\mbox{if}\;\;B^2_1\leq 2|B_3|(B_4 +2|B_3|), \vspace{2mm} \\ 2|B_3| +\dfrac{B^2_1}{B_4+2|B_3|}, \hspace{1cm}\;\mbox{otherwise},
	\end{cases}
\end{align}
where $B_4=|4B_2 +2B_3|$. All inequalities in \eqref{eq-5.1} and \eqref{eq-5.2} are sharp.
\end{LemF}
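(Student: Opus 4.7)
The plan is to reduce both inequalities to a one-variable calculus problem via the Libera--Zlotkiewicz parametrization in Lemma A, and then settle each case by elementary analysis of a quadratic on $[0,1]$. First, I would exploit that $\mathcal{P}$ is invariant under the rotation $p(z)\mapsto p(e^{-i\theta}z)$, which transforms $c_n\mapsto e^{-in\theta}c_n$. Since $|B_2c_1^2+B_3c_2|$ picks up the unimodular factor $|e^{-2i\theta}|=1$ and $|B_1c_1|$ depends only on $|c_1|$, both $\Psi_+$ and $\Psi_-$ are rotation invariant. We may therefore assume $c_1\in[0,2]$ and write, via Lemma A, $c_1=2\tau_1$ with $\tau_1\in[0,1]$ and $c_2=2\tau_1^2+2(1-\tau_1^2)\tau_2$ with $\tau_2\in\overline{\mathbb{D}}$, so that
\begin{align*}
B_2c_1^2+B_3c_2=2\tau_1^2(2B_2+B_3)+2B_3(1-\tau_1^2)\tau_2.
\end{align*}

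For the bound on $\Psi_+$, I would first maximize over $\tau_2$ using the sharp inequality $\max_{|\tau_2|\le 1}|a+b\tau_2|=|a|+|b|$, obtaining $2\tau_1^2|2B_2+B_3|+2|B_3|(1-\tau_1^2)$. The problem then collapses to maximizing the quadratic
\begin{align*}
\phi(\tau_1):=2\tau_1^2\bigl(|2B_2+B_3|-|B_3|\bigr)-2B_1\tau_1+2|B_3|
\end{align*}
on $[0,1]$. In the concave regime $|2B_2+B_3|<|B_3|$ the vertex sits at a negative abscissa, so $\phi$ is decreasing on $[0,1]$ and $\max\phi=\phi(0)=2|B_3|$; in the convex regime $|2B_2+B_3|\ge|B_3|$ the maximum is at an endpoint, and a one-line computation shows $\phi(1)\ge\phi(0)$ precisely when $|2B_2+B_3|\ge|B_3|+B_1$. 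This recovers both cases of \eqref{eq-5.1}, with the endpoint value $\phi(1)=|4B_2+2B_3|-2B_1$.

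For $\Psi_-$, I would instead \emph{minimize} $|a+b\tau_2|$ over $\tau_2\in\overline{\mathbb{D}}$, using that this minimum equals $\max\{|a|-|b|,0\}$. Writing $B_4=|4B_2+2B_3|=2|2B_2+B_3|$ and $x_0:=\sqrt{2|B_3|/(B_4+2|B_3|)}$, the upper envelope becomes piecewise: the linear increasing function $2B_1\tau_1$ on $[0,x_0]$ and the concave quadratic $\psi(\tau_1):=2|B_3|+2B_1\tau_1-\tau_1^2(B_4+2|B_3|)$ on $[x_0,1]$, whose unconstrained vertex is $\tau_1^{*}=B_1/(B_4+2|B_3|)$. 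The three cases of \eqref{eq-5.2} then correspond respectively to $\tau_1^{*}\ge 1$ (maximum $\psi(1)=2B_1-B_4$), $\tau_1^{*}\le x_0$ (maximum $2B_1 x_0=2B_1\sqrt{2|B_3|/(B_4+2|B_3|)}$ attained at the break point), and $x_0<\tau_1^{*}<1$ (interior vertex giving $2|B_3|+B_1^2/(B_4+2|B_3|)$); the conditions $B_1\ge B_4+2|B_3|$ and $B_1^2\le 2|B_3|(B_4+2|B_3|)$ are precisely the algebraic reformulations of $\tau_1^{*}\ge 1$ and $\tau_1^{*}\le x_0$.

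The main obstacle is organizing the piecewise description of the envelope for $\Psi_-$ and verifying that each of the three configurations of $x_0$ and $\tau_1^{*}$ on $[0,1]$ genuinely arises; the remainder is routine quadratic optimization. Sharpness in every branch is obtained by running the optimization in reverse: pick $\tau_1$ at the claimed extremizer, take $\tau_2\in\partial\mathbb{D}$ at an aligning value for $\Psi_+$ or at a cancelling value for $\Psi_-$, and invoke the uniqueness clauses of Lemma A to realize the corresponding extremal $p\in\mathcal{P}$ explicitly.
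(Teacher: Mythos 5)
The paper does not prove Lemma~F at all --- it is quoted verbatim from Sim and Thomas \cite{Sim-Thomas-S-2020} and used as a black box --- so there is no in-paper argument to compare against; judged on its own, your proposal is correct and is essentially the standard route to this result. The rotation normalization $c_1\in[0,2]$ is legitimate because $B_2c_1^2+B_3c_2$ acquires only the unimodular factor $e^{-2i\theta}$, Lemma~A then gives $B_2c_1^2+B_3c_2=2\tau_1^2(2B_2+B_3)+2B_3(1-\tau_1^2)\tau_2$, and your optimization over $\tau_2$ (maximum $|a|+|b|$ for $\Psi_+$, minimum $\max\{|a|-|b|,0\}$ for $\Psi_-$) followed by the quadratic analysis in $\tau_1$ reproduces exactly the case splits of \eqref{eq-5.1} and \eqref{eq-5.2}: the endpoint comparison $\phi(1)\ge\phi(0)\iff|2B_2+B_3|\ge|B_3|+B_1$, and the three configurations $\tau_1^{*}\ge1$, $\tau_1^{*}\le x_0$, $x_0<\tau_1^{*}<1$ giving $2B_1-B_4$, $2B_1\sqrt{2|B_3|/(B_4+2|B_3|)}$ and $2|B_3|+B_1^2/(B_4+2|B_3|)$ respectively. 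Two small points to tidy in a write-up: the envelope construction for $\Psi_-$ presupposes $B_4+2|B_3|>0$ (the degenerate case $B_2=B_3=0$ is trivial but should be dispatched separately, and it is also where the first two branches of \eqref{eq-5.2} meet), and in the sharpness discussion the extremal $\tau_2$ for the third branch of \eqref{eq-5.2} is the anti-aligning unimodular value (exact cancellation occurs only at $\tau_1=x_0$, i.e.\ the second branch), while in the first branch $\tau_1=1$ makes $\tau_2$ irrelevant and the extremal function is simply $p(z)=(1+z)/(1-z)$; with those touches the existence clauses of Lemma~A indeed furnish explicit extremal $p\in\mathcal{P}$ in every case.
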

We have established the following result on the sharp inequality for the moduli differences of logarithmic coefficients in the class $\mathcal{BT_\mathfrak{B}}$.
\begin{theorem}\label{Th-7.1}
Let $f\in\mathcal{BT_\mathfrak{B}}$ and has the series representation $f(z)=z+a_2z^2+a_3z^3+\cdots$, and $ \gamma_{1}, \gamma_{2} $ are given by \eqref{eq-2.2}. Then we have 
\begin{align*}
	-\frac{1}{2\sqrt{23}}\leq|\gamma_2|-|\gamma_1|\leq \frac{1}{12}.
\end{align*}
Both inequalities are sharp.
\end{theorem}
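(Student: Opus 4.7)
The plan is to reduce the statement to a direct application of Lemma F by first expressing $|\gamma_2|-|\gamma_1|$ in terms of Carath\'eodory coefficients. From \eqref{eq-2.2} and \eqref{eq-2.16} we already know that
\begin{align*}
\gamma_1=\frac{1}{16}c_1,\qquad \gamma_2=\frac{1}{24}c_2-\frac{23}{768}c_1^{2},
\end{align*}
so that
\begin{align*}
|\gamma_2|-|\gamma_1|=\Bigl|-\frac{23}{768}c_1^{2}+\frac{1}{24}c_2\Bigr|-\Bigl|\frac{1}{16}c_1\Bigr|=\Psi_{+}(c_1,c_2)
\end{align*}
in the notation of Lemma F, with the identifications $B_1=\frac{1}{16}$, $B_2=-\frac{23}{768}$ and $B_3=\frac{1}{24}$.

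For the upper bound, I would invoke the $\Psi_{+}$ estimate of Lemma F. The active branch is chosen by comparing $|2B_2+B_3|$ with $|B_3|+B_1$: a direct computation gives $|2B_2+B_3|=\frac{7}{384}$ and $|B_3|+B_1=\frac{40}{384}$, so the strict inequality $|2B_2+B_3|<|B_3|+B_1$ holds. The second branch of \eqref{eq-5.1} then delivers $|\gamma_2|-|\gamma_1|\leq 2|B_3|=\frac{1}{12}$, which is precisely the required upper bound.

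For the lower bound, I would apply the $\Psi_{-}$ estimate of Lemma F with $B_4=|4B_2+2B_3|=\frac{7}{192}$. The first branch of \eqref{eq-5.2} fails, since $B_1=\frac{12}{192}<\frac{23}{192}=B_4+2|B_3|$, but the hypothesis $B_1^{2}\leq 2|B_3|(B_4+2|B_3|)$ of the second branch does hold because $\frac{9}{2304}\leq\frac{23}{2304}$. Consequently
\begin{align*}
-\bigl(|\gamma_2|-|\gamma_1|\bigr)\leq 2B_1\sqrt{\frac{2|B_3|}{B_4+2|B_3|}}=\frac{1}{8}\sqrt{\frac{16}{23}}=\frac{1}{2\sqrt{23}},
\end{align*}
which gives the required lower bound $|\gamma_2|-|\gamma_1|\geq -\frac{1}{2\sqrt{23}}$.

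The numerical checks above are the bulk of the analytical work and are routine. The substantive remaining task is sharpness: for the upper bound the function $f_{2}$ of \eqref{eq-2.9} should suffice, since it corresponds to $\omega(z)=z^{2}$, hence to $c_1=0$ and $|c_2|=2$, which saturates the second branch of $\Psi_{+}$ and yields equality $|\gamma_2|-|\gamma_1|=\frac{1}{12}$. For the lower bound one must exhibit an $f\in\mathcal{BT_\mathfrak{B}}$ whose associated pair $(c_1,c_2)$ realises the equality condition for the middle branch of $\Psi_{-}$; this is produced through Lemma A by the specific parameter $\tau_1\in(0,1)$ arising from the equality analysis in the proof of Lemma F in \cite{Sim-Thomas-S-2020}, and then pulled back through \eqref{eq-2.16} to a concrete element of the class. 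Identifying and verifying this extremal function, rather than establishing the inequality itself, is the principal obstacle I would expect to face.
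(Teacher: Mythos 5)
Your proposal follows exactly the paper's route: the same reduction of $|\gamma_2|-|\gamma_1|$ to $\Psi_{+}(c_1,c_2)$ with $B_1=\tfrac{1}{16}$, $B_2=-\tfrac{23}{768}$, $B_3=\tfrac{1}{24}$, the same branch selections in Lemma F, the same numerical verifications, and the same sharpness function $f_2$ for the upper bound. The only piece you leave open, the extremal function for the lower bound, is settled in the paper by taking $f$ as in \eqref{eq-2.12} with
\begin{align*}
p(z)=\frac{1+\frac{8}{\sqrt{23}}\,z+z^2}{1-z^2},
\end{align*}
that is $c_1=\tfrac{8}{\sqrt{23}}$ and $c_2=2$, which realises equality in the middle branch of $\Psi_{-}$; supplying this explicit choice (and checking it attains $|\gamma_2|-|\gamma_1|=-\tfrac{1}{2\sqrt{23}}$) would complete your argument, since the theorem asserts sharpness of both bounds.
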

\begin{proof}
In view of \eqref{eq-2.2} and \eqref{eq-2.16}, we see that
\begin{align}\label{eq-5.3}
	\nonumber|\gamma_2|-|\gamma_1|&=\;\vline\; \frac{1}{2} \left(a_{3} -\frac{1}{2}a^2_{2}\right)\;\vline - \;\vline\; \frac{1}{2} a_{2}\;\vline\\&\nonumber=\;\vline\; -\frac{23}{768}c^2_1 +\frac{1}{24}c_2\;\vline - \;\vline\; \frac{1}{16} c_1\;\vline\\&=\Psi_{+}(c_1,c_2),
\end{align}
where
\begin{align*}
	B_1=\frac{1}{16},\;\; B_2=-\frac{23}{768} \;\;\mbox{and}\;\; B_3= \frac{1}{24}.
\end{align*}
\noindent{\bf Estimate of the upper bound:} For the upper bound, we see that $|2B_2 +B_3|=\frac{7}{384}$ and $|B_3|+ B_1=\frac{5}{48}$. It follows that $|2B_2 +B_3|\not\geq |B_3|+ B_1$. Hence, applying Lemma F, we obtain
\begin{align*}
	\Psi_{+}(c_1,c_2)\leq 2|B_3|=\frac{1}{12}.
\end{align*}
As a result, applying \eqref{eq-5.3}, we obtain
\begin{align}\label{eq-5.4}
	|\gamma_2|-|\gamma_1|\leq \frac{1}{12}.
\end{align}
The function $f_2$, which is defined in \eqref{eq-2.9} gives the sharpness of the inequality \eqref{eq-5.4}.\vspace{2mm}
	
\noindent{\bf Estimate of the lower bound:} For the lower bound, we see that $B_4=|4B_2 +2B_3|=\frac{7}{192}$, $B_4 +2|B_3|=\frac{23}{192}$. It follows that $B_1\not\geq B_4 +2|B_3| $. Again, we have $2|B_3|(B_4 +2|B_3|)=\frac{23}{2304}$, so the condition
$B^2_1\leq 2|B_3|(B_4 +2|B_3|)$ is true. Thus, by Lemma F, we have
\begin{align*}
	\Psi_{-}(c_1,c_2)\leq 2B_1 \sqrt{\dfrac{2|B_3|}{B_4+2|B_3|}} =\frac{1}{2\sqrt{23}}.
\end{align*}
Clearly, we observe that
\begin{align*}
	\Psi_{+}(c_1,c_2)=-\Psi_{-}(c_1,c_2)\geq -\frac{1}{2\sqrt{23}}.
\end{align*}
Hence, from \eqref{eq-5.3}, we conclude
\begin{align}\label{eq-5.5}
	|\gamma_2|-|\gamma_1|\geq -\frac{1}{2\sqrt{23}}.
\end{align}
The inequality \eqref{eq-5.5} is sharp for the function $f\in\mathcal{A}$ given by \eqref{eq-2.12} with
\begin{align*}
	p(z)=\frac{1+\frac{8}{\sqrt{23}}z+z^2}{1-z^2},
\end{align*}
which completes the proof.
\end{proof}

We have established the following result on the sharp inequality for the moduli differences of logarithmic inverse coefficients in the class $\mathcal{BT_\mathfrak{B}}$.
\begin{theorem}\label{Th-7.2}
Let $f\in\mathcal{BT_\mathfrak{B}}$ and has the series representation $f(z)=z+a_2z^2+a_3z^3+\cdots$, and $ \Gamma_{1}, \Gamma_{2} $ are given by \eqref{eq-3.4}. Then we have 
\begin{align*}
	-\frac{1}{2\sqrt{29}}\leq|\Gamma_2|-|\Gamma_1|\leq \frac{1}{12}.
\end{align*}
Both inequalities are sharp.
\end{theorem}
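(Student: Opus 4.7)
The plan is to follow the template of Theorem \ref{Th-7.1} essentially verbatim, since the passage from logarithmic to logarithmic-inverse coefficients only changes the numerical constants feeding into Lemma F. First I would use the expressions \eqref{eq-3.4} for $\Gamma_1,\Gamma_2$ together with the Schwarz-function parametrization \eqref{eq-3.12} to write
\begin{align*}
|\Gamma_2|-|\Gamma_1|
= \Bigl|\tfrac{1}{2}\bigl(a_3-\tfrac{3}{2}a_2^2\bigr)\Bigr|-\Bigl|\tfrac{1}{2}a_2\Bigr|
= \bigl|B_2 c_1^2+B_3 c_2\bigr|-\bigl|B_1 c_1\bigr|,
\end{align*}
and then read off the constants. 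A short arithmetic check (combining $-\tfrac{5}{96}$ from $a_3$ with $-\tfrac{3}{128}$ coming from $-\tfrac{3}{2}a_2^2$ gives a coefficient of $-\tfrac{29}{384}$ in front of $c_1^2$ inside the first modulus) should yield
\begin{align*}
B_1=\tfrac{1}{16},\qquad B_2=-\tfrac{29}{768},\qquad B_3=\tfrac{1}{24}.
\end{align*}
This is exactly the $\Psi_{+}(c_1,c_2)$ setup of Lemma F, so the proof reduces to running through that lemma with these particular values.

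For the upper bound I would compute $|2B_2+B_3|=\tfrac{13}{384}$ and $|B_3|+B_1=\tfrac{5}{48}=\tfrac{40}{384}$, which puts us in the \emph{second} case of \eqref{eq-5.1}. Lemma F then immediately gives $\Psi_{+}(c_1,c_2)\leq 2|B_3|=\tfrac{1}{12}$, as claimed. Sharpness should transfer directly from the proof of Theorem \ref{Th-7.1}: the function $f_2$ defined in \eqref{eq-2.9} corresponds to $c_1=0$, $|c_2|=2$, which saturates the bound $2|B_3|$.

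For the lower bound I would pass to $\Psi_{-}=-\Psi_{+}$ and compute $B_4=|4B_2+2B_3|=\tfrac{13}{192}$ and $B_4+2|B_3|=\tfrac{29}{192}$. Since $B_1=\tfrac{12}{192}<\tfrac{29}{192}=B_4+2|B_3|$, the first case of \eqref{eq-5.2} does not apply. Checking the middle case, $B_1^2=\tfrac{1}{256}=\tfrac{9}{2304}$ while $2|B_3|(B_4+2|B_3|)=\tfrac{29}{2304}$, so indeed $B_1^2\leq 2|B_3|(B_4+2|B_3|)$, and Lemma F yields
\begin{align*}
\Psi_{-}(c_1,c_2)\leq 2B_1\sqrt{\dfrac{2|B_3|}{B_4+2|B_3|}}
=\dfrac{1}{8}\sqrt{\dfrac{16}{29}}=\dfrac{1}{2\sqrt{29}},
\end{align*}
which rearranges to $|\Gamma_2|-|\Gamma_1|\geq -\tfrac{1}{2\sqrt{29}}$.

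The main thing to be careful about is the sharpness of the lower bound, since there is no ready-made integral extremal like $f_2$ or $f_3$ in that case. Guided by Theorem \ref{Th-7.1}, I would construct an extremal $f$ via \eqref{eq-2.12} where the Schwarz function $\omega$ corresponds to a Carath\'eodory function of the form
\begin{align*}
p(z)=\dfrac{1+\alpha z+z^2}{1-z^2},
\end{align*}
with $\alpha$ chosen so that the pair $(c_1,c_2)=(\alpha,2)$ realizes the extremal in Lemma F for the chosen $(B_1,B_2,B_3)$. Mimicking Theorem \ref{Th-7.1} (where $\alpha=\tfrac{8}{\sqrt{23}}$ arose from matching $B_1$ with $B_4+2|B_3|$) suggests the correct value here will be $\alpha=\tfrac{8}{\sqrt{29}}$; the only routine task is to verify this algebraically from the equality conditions of Lemma F. Apart from that verification, every step is a direct translation of the argument already used for Theorem \ref{Th-7.1}.
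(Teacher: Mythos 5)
Your proposal reproduces the paper's proof essentially step for step: the same reduction to $\Psi_{+}(c_1,c_2)$ with $B_1=\tfrac{1}{16}$, $B_2=-\tfrac{29}{768}$, $B_3=\tfrac{1}{24}$, the same case selections in Lemma F giving $2|B_3|=\tfrac{1}{12}$ and $2B_1\sqrt{2|B_3|/(B_4+2|B_3|)}=\tfrac{1}{2\sqrt{29}}$, and the same extremals ($f_2$ from \eqref{eq-2.9} for the upper bound and the Carath\'eodory function $p(z)=\frac{1+\frac{8}{\sqrt{29}}z+z^2}{1-z^2}$ for the lower). The equality verification you defer is likewise only asserted, not carried out, in the paper, so nothing is missing relative to its argument.
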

\begin{proof}
In view of \eqref{eq-2.2} and \eqref{eq-2.16}, we see that
\begin{align}\label{eq-7.3}
	\nonumber|\gamma_2|-|\gamma_1|&=\;\vline\; -\dfrac{1}{2} \left(a_3 -\dfrac{3}{2}a^2_{2}\right)\;\vline - \;\vline\; -\frac{1}{2} a_{2}\;\vline\\&\nonumber=\;\vline\;\dfrac{1}{2} \left(a_3 -\dfrac{3}{2}a^2_{2}\right)\;\vline - \;\vline\; \frac{1}{2} a_{2}\;\vline\\&\nonumber=\;\vline\; -\frac{29}{768}c^2_1 +\frac{1}{24}c_2\;\vline - \;\vline\; \frac{1}{16} c_1\;\vline\\&=\Psi_{+}(c_1,c_2),
\end{align}
where
\begin{align*}
	B_1=\frac{1}{16},\;\; B_2=-\frac{29}{768} \;\;\mbox{and}\;\; B_3= \frac{1}{24}.
\end{align*}
\noindent{\bf Estimate of the upper bound:} For the upper bound, we see that $|2B_2 +B_3|=\frac{13}{384}$ and $|B_3|+ B_1=\frac{5}{48}$. It is easy to check that $|2B_2 +B_3|\not\geq |B_3|+ B_1$. Hence, in view of the Lemma F, we obtain
\begin{align*}
	\Psi_{+}(c_1,c_2)\leq 2|B_3|=\frac{1}{12}.
\end{align*}
Thus, it follows from \eqref{eq-7.3} that
\begin{align}\label{eq-7.4}
	|\Gamma_2|-|\Gamma_1|\leq \frac{1}{12}.
\end{align}
The function $f_2$, which is defined in \eqref{eq-2.9} gives the sharpness of the inequality \eqref{eq-7.4}.\vspace{2mm}
	
\noindent{\bf Estimate of the lower bound:} For the lower bound, we see that $B_4=|4B_2 +2B_3|=\frac{13}{192}$, $B_4 +2|B_3|=\frac{29}{192}$. It follows that $B_1\not\geq B_4 +2|B_3| $. Again, we have $2|B_3|(B_4 +2|B_3|)=\frac{29}{2304}$, so the condition
$B^2_1\leq 2|B_3|(B_4 +2|B_3|)$ is true. Thus, by Lemma F, we have
\begin{align*}
	\Psi_{-}(c_1,c_2)\leq 2B_1 \sqrt{\dfrac{2|B_3|}{B_4+2|B_3|}} =\frac{1}{2\sqrt{29}}.
\end{align*}
A simple computation leads to 
\begin{align*}
	\Psi_{+}(c_1,c_2)=-\Psi_{-}(c_1,c_2)\geq -\frac{1}{2\sqrt{29}}.
\end{align*}
Consequently, from \eqref{eq-5.3}, we obtain
\begin{align}\label{eq-7.5}
	|\Gamma_2|-|\Gamma_1|\geq -\frac{1}{2\sqrt{29}}.
\end{align}
The inequality \eqref{eq-7.5} is sharp for the function $f\in\mathcal{A}$ given by \eqref{eq-2.12} with
\begin{align*}
	p(z)=\frac{1+\frac{8}{\sqrt{29}}z+z^2}{1-z^2},
\end{align*}
which completes the proof.
\end{proof}

\section{\bf Conclusion}
We have obtained sharp bounds for the logarithmic coefficients, as well as for the second-order Hankel determinants involving logarithmic coefficients, in the class of functions characterized by bounded turning, which is associated with the intriguing bean-shaped domain. Notably, all the bounds we established are sharp. Furthermore, various important properties of these functions have been analyzed, including estimates related to the generalized Zalcman and moduli differences of logarithmic coefficients. \vspace{2mm}\\

\noindent{\bf Acknowledgment:} The authors would like to thank the referee(s) for their helpful suggestions and comments for the improvement of the exposition of the paper. The first author is supported by CSIR-SRF (File No: 09/0096(12546)/2021-EMR-I, dated: 08/10/2024), Govt. of India, New Delhi and the second author supported by SERB File No. SUR/2022/002244, Govt. of India.\\

\noindent\textbf{Compliance of Ethical Standards:}\\

\noindent\textbf{Conflict of interest.} The authors declare that there is no conflict  of interest regarding the publication of this paper.\vspace{1.5mm}

\noindent\textbf{Data availability statement.}  Data sharing is not applicable to this article as no datasets were generated or analyzed during the current study.

\end{document}